\newtheorem{theorem}{Theorem}[section]
\newtheorem{lemma}[theorem]{Lemma}
\newtheorem{proposition}[theorem]{Proposition}
\newtheorem{definition}[theorem]{Definition}
\theoremstyle{remark}
\newtheorem{remark}[theorem]{Remark}
\newtheorem{example}[theorem]{Example}
\newcommand{\set}[1]{\left\{#1\right\}}
\newcommand{\trr}{\triangleright}
\newcommand{\brr}{\blacktriangleright}
\newcommand{\rrt}{\triangleleft\,}
\newcommand{\Hash}{\begin{minipage}{8pt}\includegraphics[width=8pt]{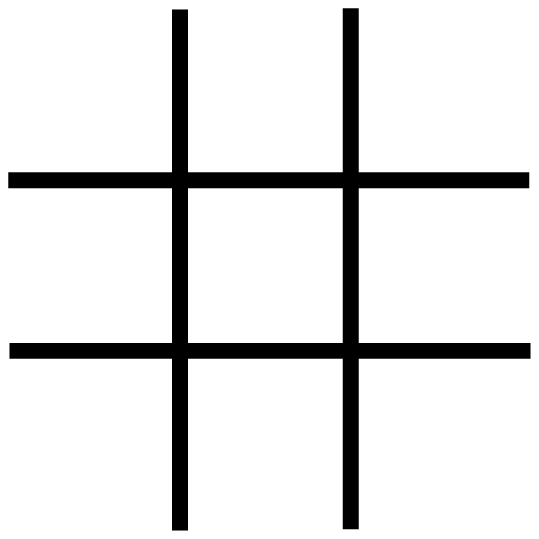}\end{minipage}}
\newcommand{\ass}{\stackrel{\textup{\tiny def}}{=}}
\newcommand{\dU}{ \, \raisebox{1.25pt}{\scalebox{.8}{$\coprod$}} \; }
\begin{document}



\title{Inca Foams}

\author{Avishy Y. Carmi and Daniel Moskovich}

\address{Faculty of Engineering Sciences \& \\ Center for Quantum Information Science and Technology \\ Ben-Gurion University of the Negev, Beer-Sheva 8410501, Israel}

\begin{abstract}
We study a certain class of embedded two-foams that arise from gluing discs into ribbon torus knots along nonintersecting torus meridians. We exhibit several equivalent diagrammatic formalisms for these objects and identify several of their invariants, including a unique prime decomposition.
\end{abstract}

\keywords{embedded complexes; Roseman moves; diagrammatic Algebra; Gauss diagrams; topological invariants; prime decomposition}

\maketitle

\section{Introduction}


An \emph{embedded $2$--foam in standard Euclidean $\mathds{R}^4$} is a $2$--dimensional analogue of a knotted trivalent graph $\mathds{R}^3$. We investigate a certain class of embedded $2$--foams which we call \emph{Inca foams} (this name was suggested to us from \cite{Siegel:15}) that arise from gluing discs into ribbon torus knots along nonintersecting torus meridians. We exhibit five diagrammatic formalisms for Inca foams. We then identify several invariants of Inca foams, including their unique prime decompositions.

The geometric topological study of embedded $2$--foams was initiated by Carter \cite{Carter:12}. A knotted surface in $\mathds{R}^4$ is in particular a $2$--foam, and so the theory of embedded $2$--foams is at least as complicated the theory of knotted surfaces in dimension $4$ \cite{CarterKamadaSaito:04}.
Inca foams are a much simpler class of objects than $2$--foams (\textit{e.g.} no local knotting) but are still complicated enough to be interesting. The theory of coloured versions of such foams is equivalent to a construction that was used by the authors to topologically model fusion of information and as a model for computation \cite{CarmiMoskovich:14,CarmiMoskovich:15a,CarmiMoskovich:15b}, and it is this that is our main motivation.

We describe the contents of this paper. Section~\ref{S:Necklace2Foam} defines Inca foams which Section~\ref{S:FiveDescriptions} describes in five (5) different diagrammatic ways. The first is a broken surface diagram of the foam, the second is a broken surface diagram of tangled spheres and intervals, the third is a $3$--dimensional analogue of a tangle diagram, the fourth is a tangle diagram, and the fifth is a Gau{\ss} diagram. Section~\ref{S:FormalismEquivalence} proves that these are equivalent. Each of the diagrammatic formalisms is useful for something else. The more topological ones are better for proving theorems, and the more combinatorial ones are better for defining invariants. 

Section~\ref{S:Invariants} describes some simple Inca foam invariants. Some, such as underlying graph and underlying w-tangle, are structures which we obtain by suppressing some of the information in an Inca foam. Some, such as the fundamental quandle and the linking graph, are analogues of classical link invariants. One, the Shannon capacity, is an analogue of a graph invariant.

Section~\ref{S:PrimeDecomposition} discusses unique prime decomposition for connected Inca foams. The prime decomposition is of course an invariant up to permutation of factors. Existence and uniqueness of prime decomposition indicates how well-behaved this class of objects is in comparison with classes such as virtual tangles and w-tangles.

Much of the content of this paper originally appeared in the preprint \cite{CarmiMoskovich:14b}, which is being split into parts, the first of which is the present paper.

\section{Inca foams}\label{S:Necklace2Foam}

So what is an Inca foam? We give the definition below. 

\begin{definition}\label{D:Necklace}
Parameterize $S^1\simeq \mathds{R}/\mathds{Z}$, and for a given $k\geq 1$, glue $2$--discs $D_1,\ldots,D_k$ to a torus $T^2\ass S^1\times S^1$ so that $\partial D_j$ glues to $S^1\times \set{\frac{j}{k}}$ for $j=1,2,\ldots,k$. Call the resulting shape $K_k$. An \emph{Inca foam} is an immersion
\[F\colon\, \bigcup_{i=1}^\nu K_{k_i}\to S^4\simeq \mathds{R}^4\cup\set{\infty}\enspace ,\]
\noindent whose restriction to $\mathds{R}^4$ is an embedding, for which $F(T^2)$ bounds a solid torus for each $K_{k_i}$. In the above, $k_1,k_2,\ldots,k_\mu$ are positive integers. Call $F(K_{k_1}), F(K_{k_2}),\ldots, F(K_{k_\nu})$ the \emph{components} of $F$. See Figure~\ref{F:NecklaceExample}.

Equivalently (and to fix notation), $F(D_i\cup (S^1\times [\frac{i}{n},\frac{i+1}{n}])\cup D_{i+1})$ is homeomorphic to a sphere $S_i$, and this sphere must bound a $3$--ball $B_i$ in $S^4$.

We additionally require that the special point $\set{\infty}\in S^4$ lie either outside the bounded solid tori, or else that it lie in the interior of at most one of the $2$--discs for each connected component. Thus, different connected components might intersect but only inside their $2$--discs and only at the point $\infty$.
\end{definition}

\begin{figure}
\centering
\includegraphics[width=0.5\textwidth]{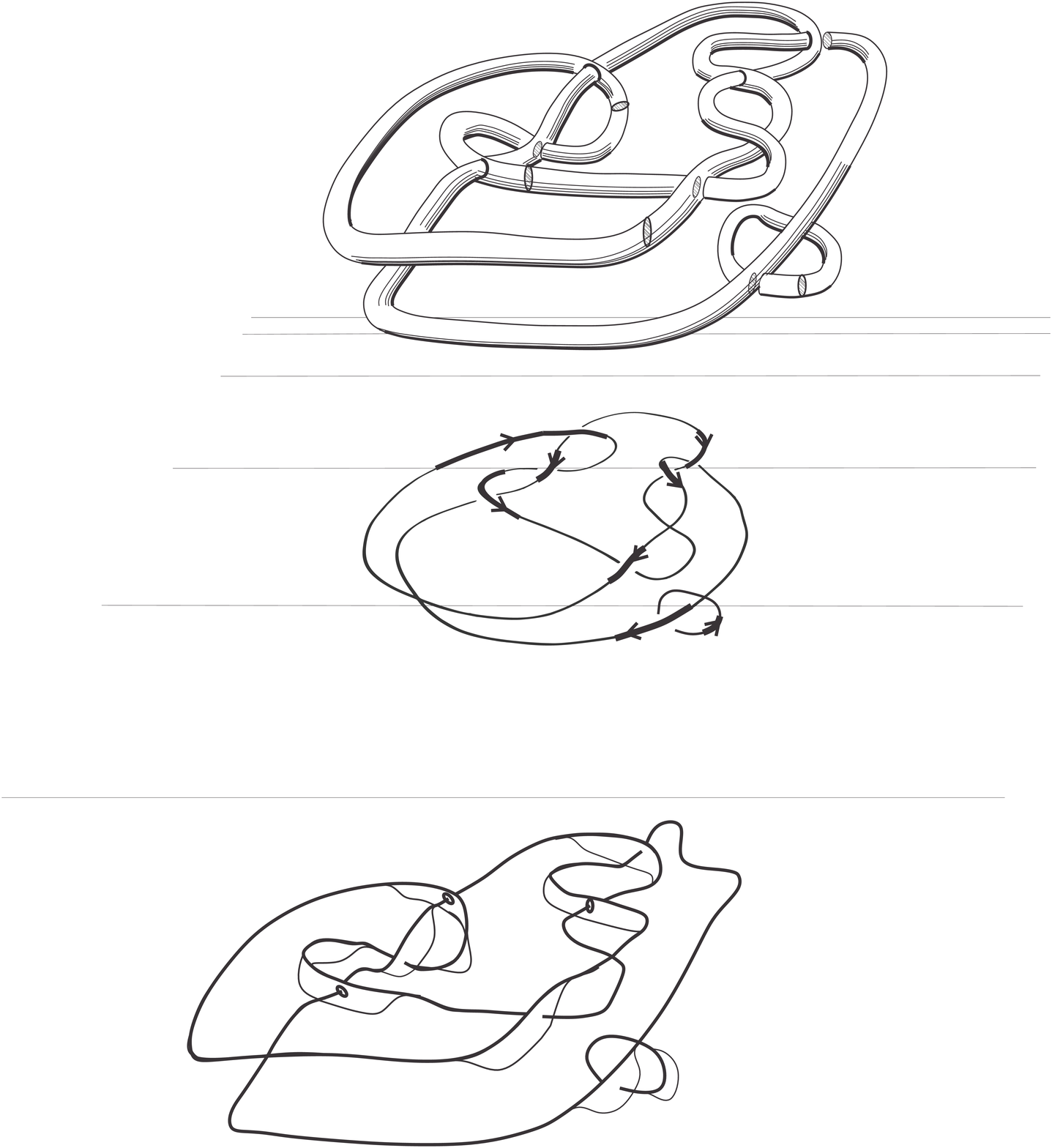}
\caption{\label{F:NecklaceExample} A Roseman, a Rosemeister, and a Reidemeister diagram of an Inca foam.}
\end{figure}

Our convention is that our objects live in the smooth category, and we smooth corners automatically at every step usually without comment. Such sloppiness is standard in geometric topology--- \cite{Kirby:89} famously begins with the words ``\ldots the phrase ``corners can be smoothed'' has been a phrase that I have heard for 30 years, and this is not the place to explain it''.

Various generalizations of Definition~\ref{D:Necklace} suggest themselves. For example, if we allow each sphere to intersect an arbitrary number of other spheres at disjoint disks, \textit{i.e.} if we consider surfaces of higher genus than tori, then the effect is only to make diagrams and notations more complicated--- the mathematics is essentially unchanged and all of our constructions generalize in a straightforward way. For example, the underlying graph of a Gau{\ss} diagram (see Section~\ref{SS:Gauss}) becomes an arbitrary graph instead of a collection of paths and cycles. If disks are allowed to have disk intersection then trees replace intervals \textit{e.g.} in Section~\ref{SS:SITangle}, and underlying graphs have two different kinds of vertices; Everything generalizes to this setting as well but more work is needed.

Inca foams are considered equivalent if they are \emph{ambient isotopic in $\mathds{R}^4$}, a definition which we recall below in our setting:

\begin{definition}[Ambient isotopy]
Consider a class $\mathfrak{T}$ of embedded objects in standard $S^4\simeq \mathds{R}^4\cup \{\infty\}$.
Two embedded objects $T_1, T_2\in \mathfrak{T}$ are \emph{ambient isotopic in $\mathds{R}^4$} if there exists a smooth homeomorphism $h\colon\, \mathds{R}^4\times [0,1]\to \mathds{R}^4$ with $h(T_1\times \{0\})=T_1$, and $h(T_1\times \{t\})$ is an element of $\mathfrak{T}$ for all $t\in[0,1]$, and $h(T_1\times \{1\})=T_2$.
\end{definition}

We further define \emph{(de)stabilization} to be the following operation.

\begin{definition}[Stabilization; Destabilization]
Let $S$ be a sphere in an Inca foam $F$ which bounds a ball $B$ whose interior does not intersect $F$. Let $S^\prime$ be a sphere in $F$ which shares a disk $D$ with $S$. The \emph{destabilization of $F$ by $D$} is the Inca foam obtained by erasing $D$ from $F$ and smoothing corners (so that $S$ and $S^\prime$ effectively become a single sphere). The inverse operation to destabilization is called \emph{stabilization}. See Figure~\ref{F:FoamStabilize}. 
\end{definition}

\begin{figure}
\centering
$\begin{minipage}{1cm}\includegraphics[height=3cm]{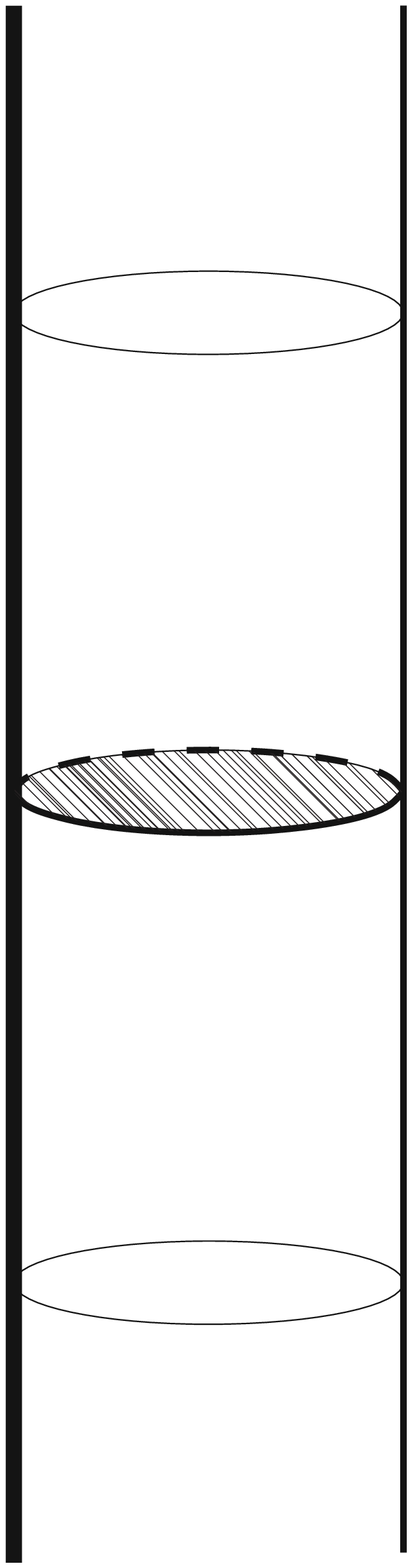}\end{minipage}\quad\longleftrightarrow\quad \begin{minipage}{1cm}\includegraphics[height=3cm]{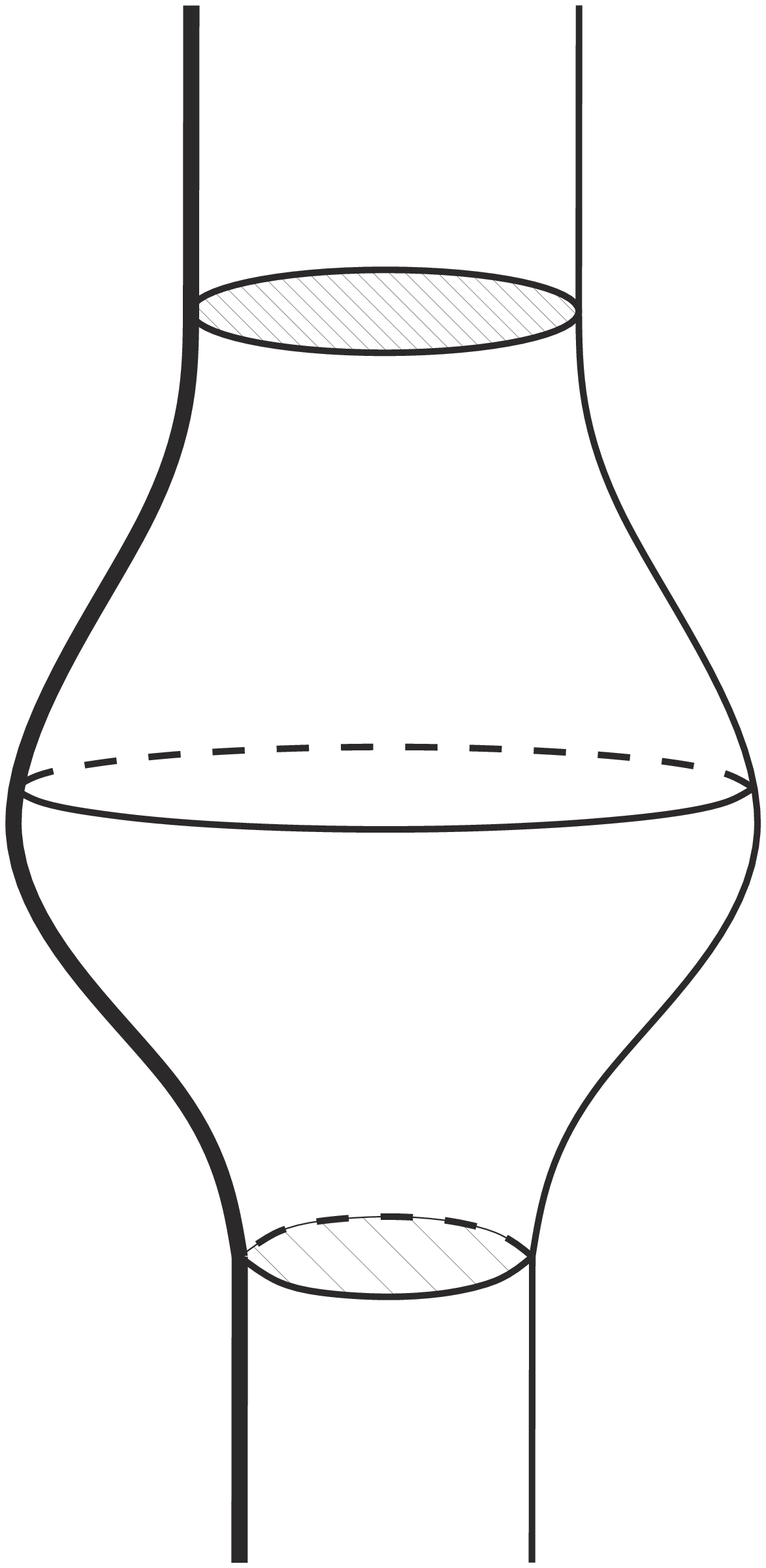}\end{minipage}$
\caption{\label{F:FoamStabilize}Stabilization of Inca foams.}
\end{figure}

\begin{definition}[Equivalence; Stable equivalence]
Two Inca foams are said to be \emph{equivalent} if they are ambient isotopic. They are said to be \emph{stably equivalent} if they have equivalent stabilizations.
\end{definition}

\section{Five diagrammatic descriptions}\label{S:FiveDescriptions}

This section describes Inca foams in five different ways, starting from the most geometric and progressing to the most combinatorial. Section~\ref{S:FormalismEquivalence} proves that these describe the same objects. The more geometric descriptions are easier to use to prove theorems with, while the more combinatorial ones are better suited for calculations.

\begin{table}
\begin{center}
\begin{tabular}{ | c | c| c | c | c |}
\hline
Formalism & Section & Agent & Local moves & Stabilization\\ \hline
Inca foams \rule{0pt}{1.25cm} & \ref{SS:Roseman} & \begin{minipage}{0.4cm}
\includegraphics[height=1.3cm]{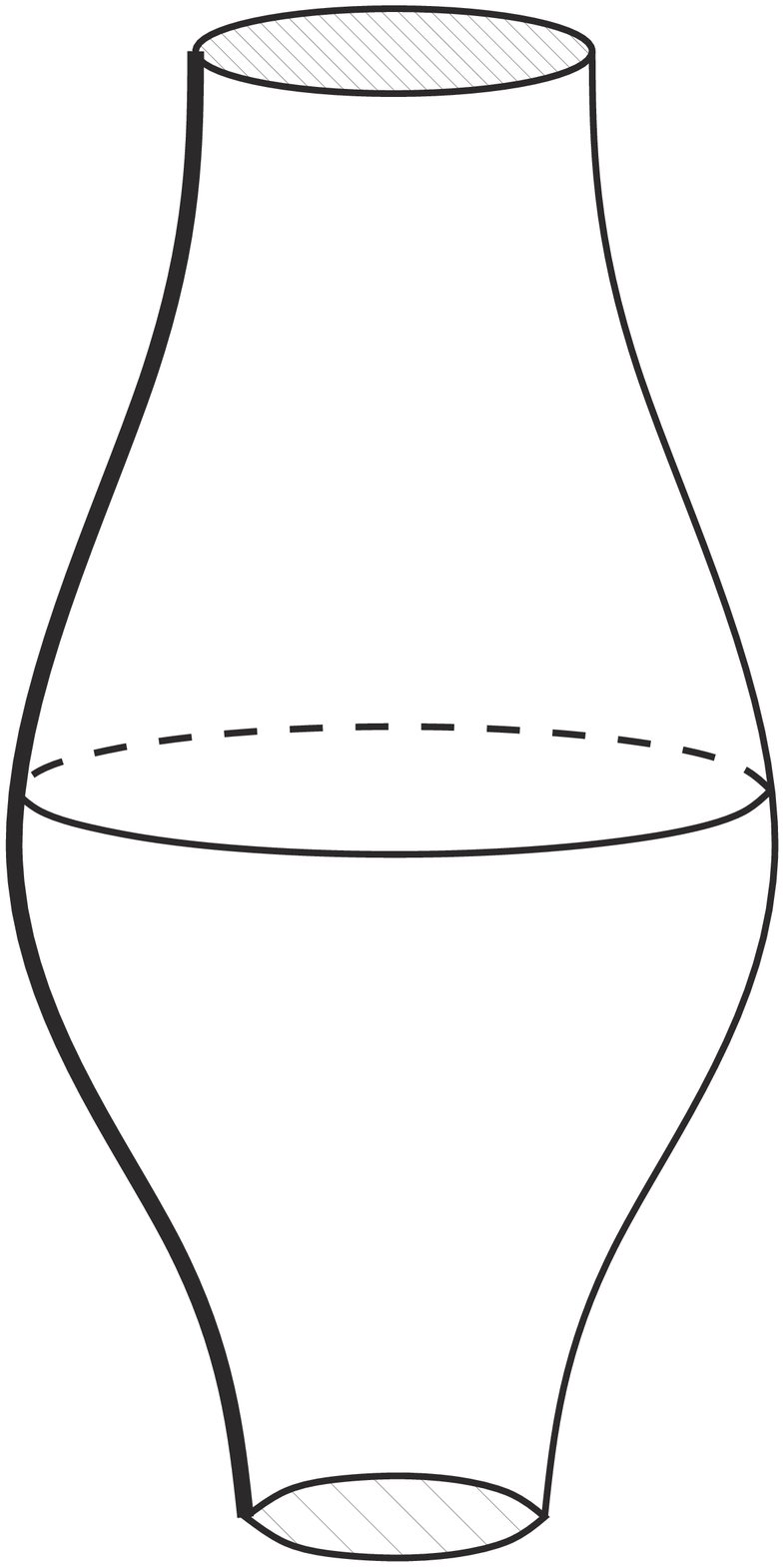}
\end{minipage} & \parbox[c]{1.35in}{Carter--Roseman moves \cite{Carter:12}.}& $\quad \begin{minipage}{0.4cm}\includegraphics[height=1.2cm]{foam_stabilize2}\end{minipage}\longleftrightarrow \begin{minipage}{0.4cm}\includegraphics[height=1.2cm]{foam_stabilize1}\end{minipage}\quad $\\[0.82cm]
\hline
Roseman & \ref{SS:SITangle} & \begin{minipage}{0.3cm}
\includegraphics[height=1.8cm]{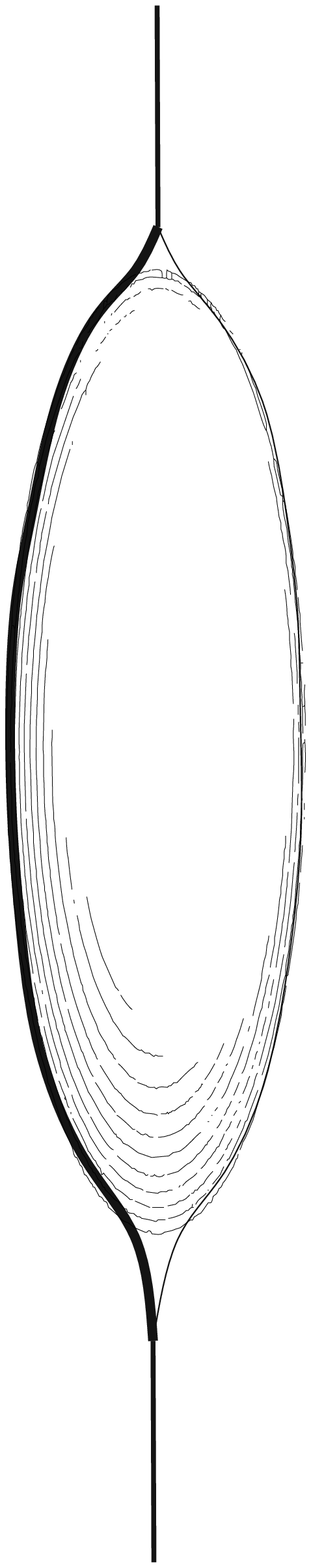}
\end{minipage} & \scalebox{0.21}{\begin{minipage}{0.95\textwidth}
\centering
\includegraphics[width=\textwidth]{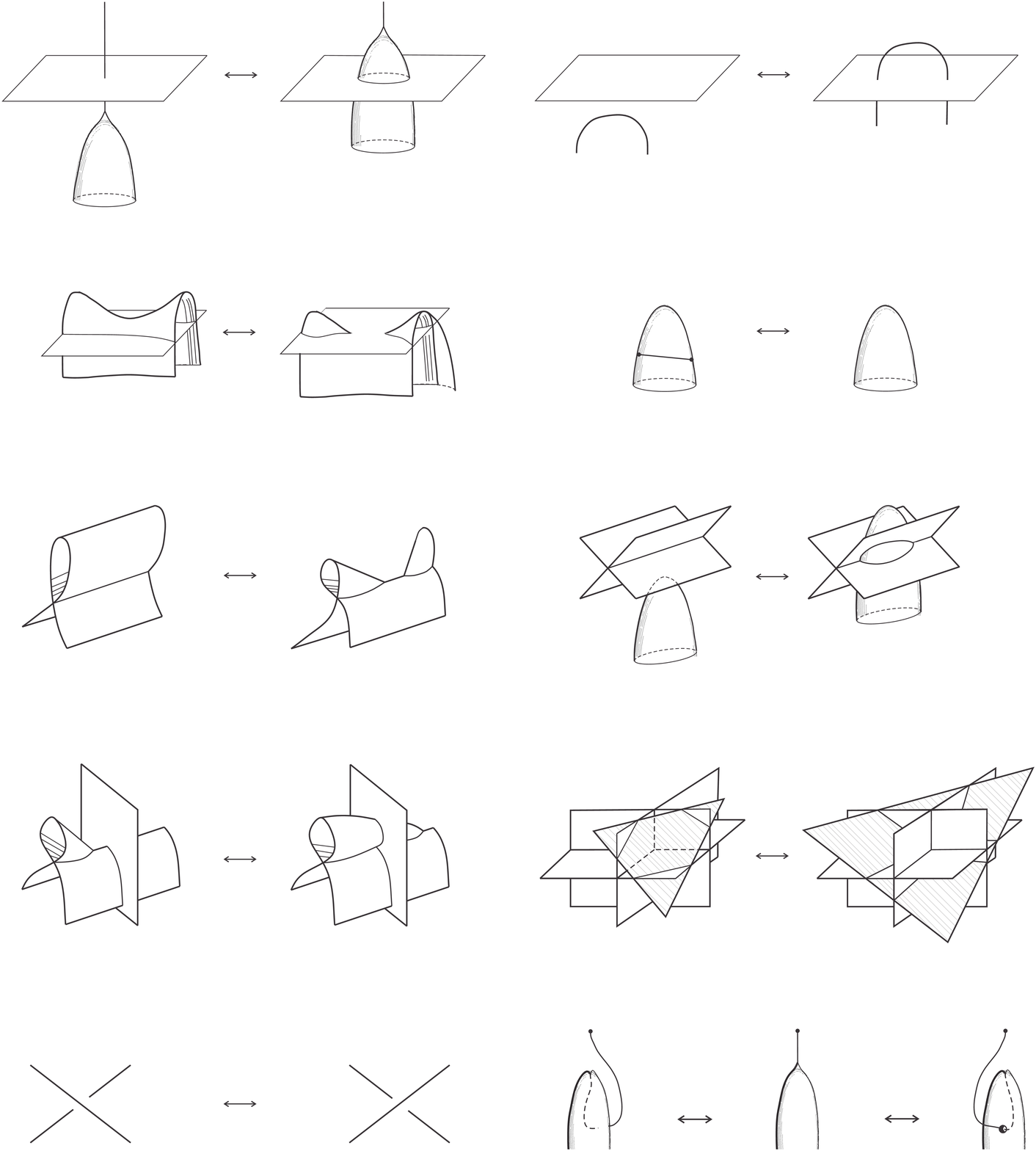}\end{minipage}}\rule{0pt}{2.4cm} & $\begin{minipage}{0.4cm}\ \,\includegraphics[height=1.2cm]{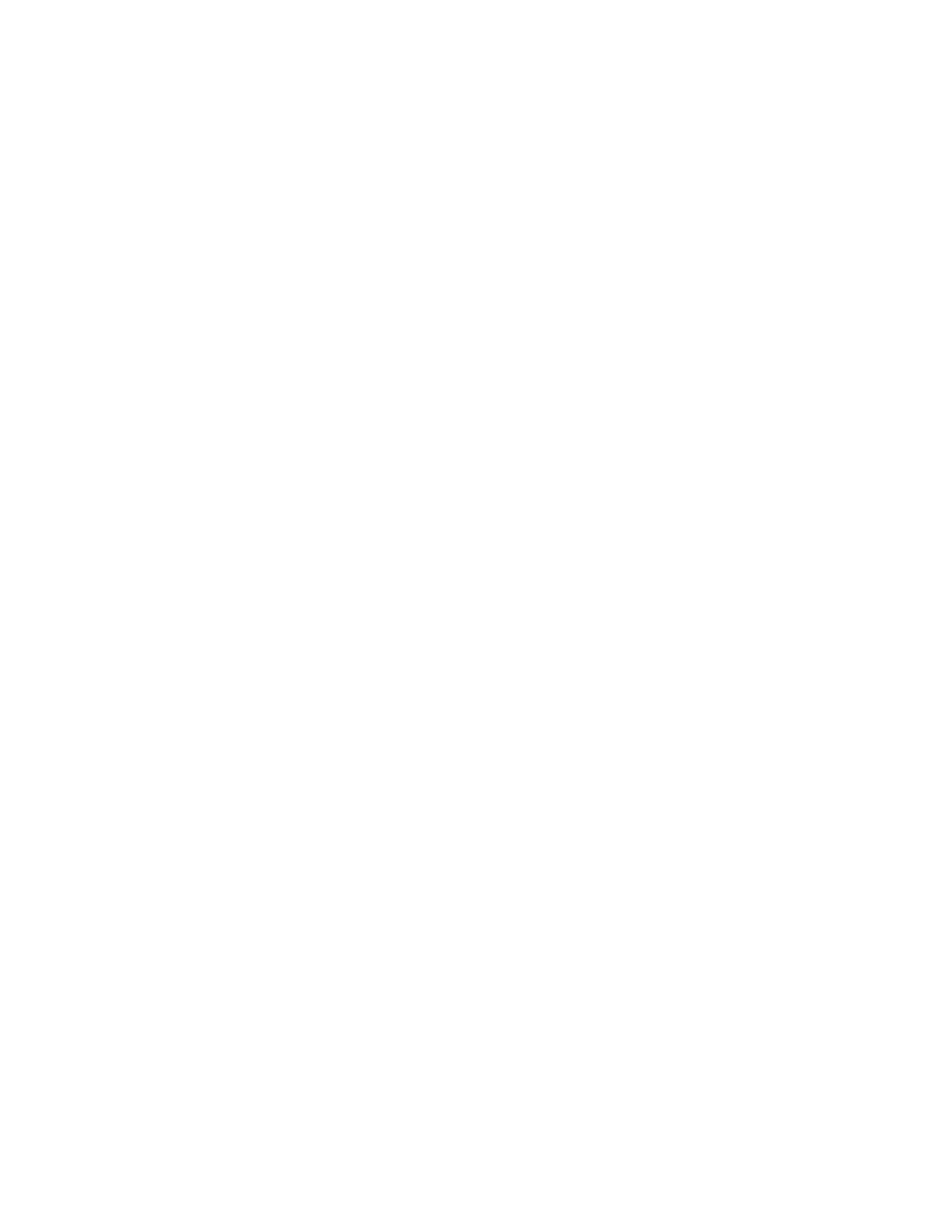}\end{minipage}\rule{0pt}{1.8cm}\longleftrightarrow\  \begin{minipage}{0.4cm}\includegraphics[height=1.4cm]{si_agent}\end{minipage}$\\[2.2cm]
\hline
Rosemeister & \ref{SS:Rosemeister} & \begin{minipage}{0.3cm}
\includegraphics[height=1.8cm]{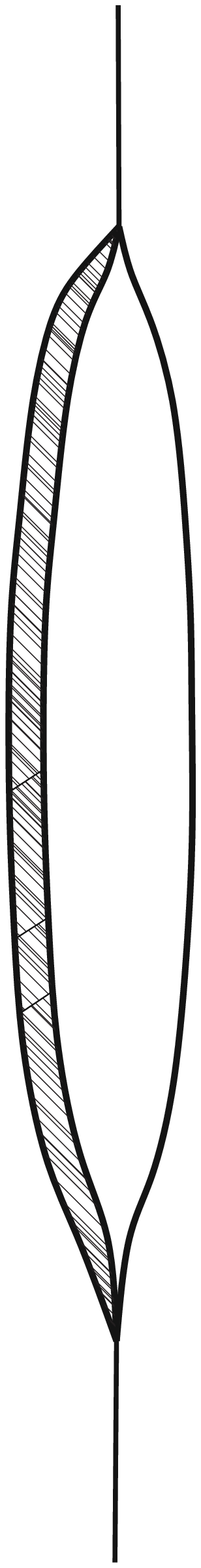}
\end{minipage} & \scalebox{0.3}{\begin{minipage}{0.95\textwidth}\psfrag{r}[c]{}\psfrag{s}[c]{}\psfrag{t}[c]{}\psfrag{u}[c]{}
\centering
\includegraphics[width=0.95\textwidth]{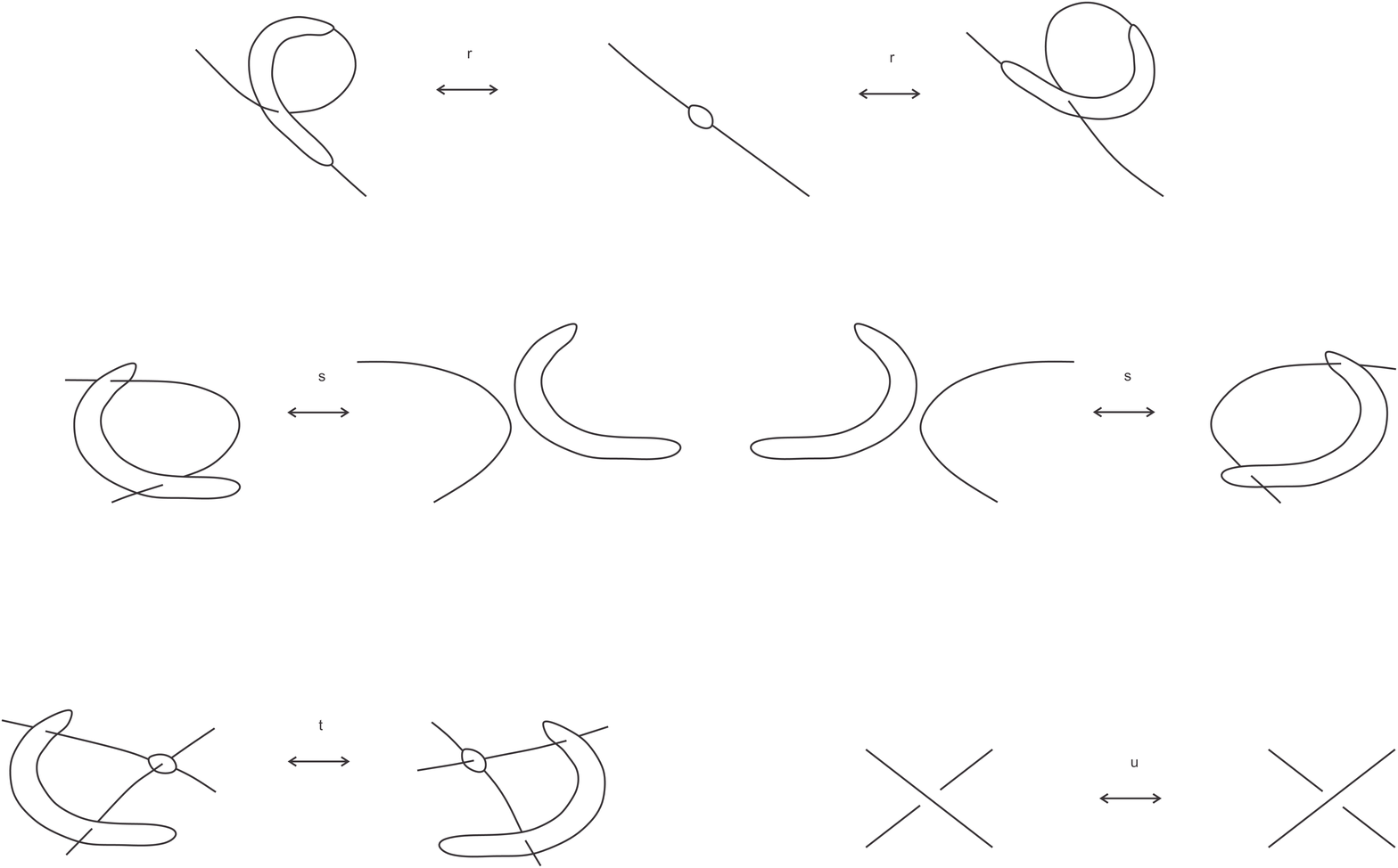}\end{minipage}}\rule{0pt}{1.8cm} & $\begin{minipage}{0.4cm}\ \,\includegraphics[height=1.2cm]{vline}\end{minipage}\longleftrightarrow\  \begin{minipage}{0.4cm}\includegraphics[height=1.4cm]{rosemeister_agent}\end{minipage}$\\[1.8cm]
\hline
Reidemeister & \ref{SS:Tangle} & \begin{minipage}{0.3cm}
\includegraphics[height=1.8cm]{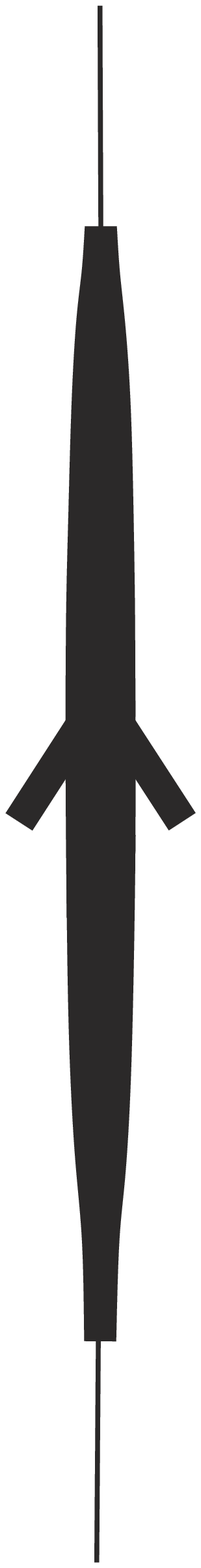}
\end{minipage} & \scalebox{0.25}{\begin{minipage}{0.95\textwidth}
\centering
\includegraphics[width=\textwidth]{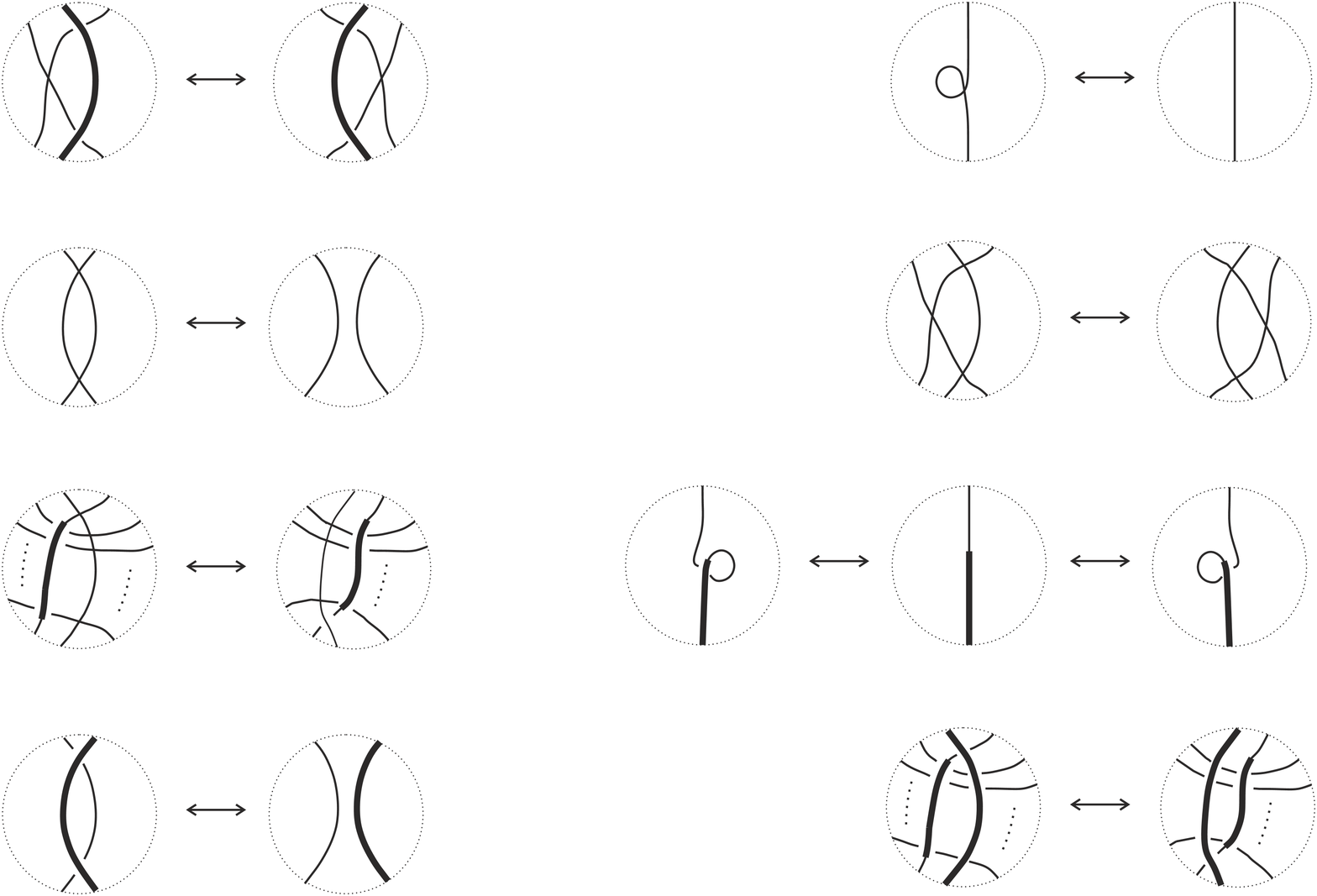}\end{minipage}}\rule{0pt}{1.8cm} & $\begin{minipage}{0.4cm}\ \,\includegraphics[height=1.2cm]{vline}\end{minipage}\longleftrightarrow \ \begin{minipage}{0.4cm}\includegraphics[height=1.2cm]{Reidemeister_agent}\end{minipage}$\\[1.8cm]
\hline
Gau{\ss} diagram, & \ref{SS:Gauss} & \raisebox{-0.5\height}{\includegraphics[width=12pt]{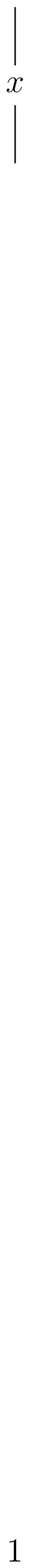}} & \scalebox{0.3}{\begin{minipage}{0.95\textwidth}
\centering
\includegraphics[width=\textwidth]{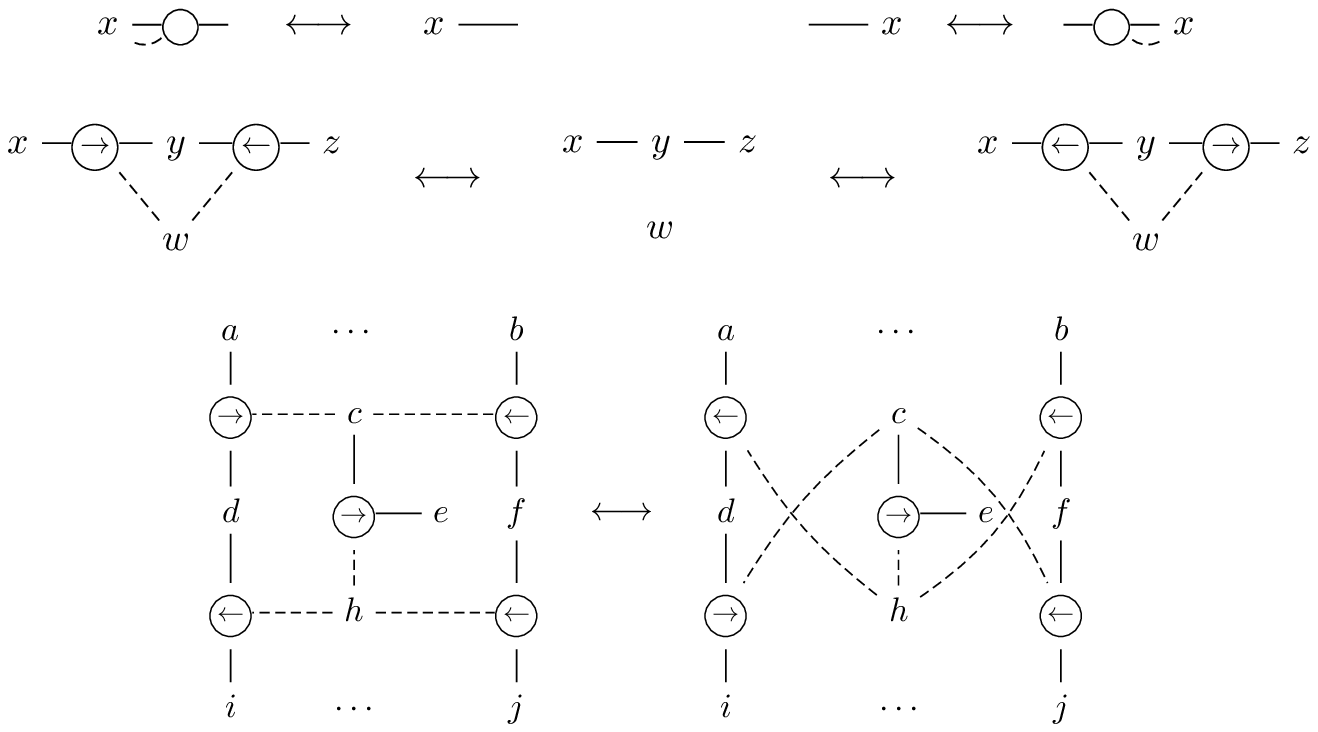}\end{minipage}}\rule{0pt}{1.8cm} & \raisebox{-0.5\height}{\includegraphics[width=0.128\textwidth]{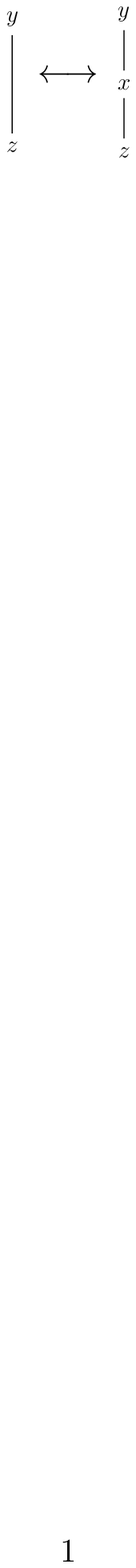}}\\[1.8cm]
\hline
\end{tabular}
\end{center}
\caption{\label{T:diagrams}The five diagrammatic formalisms for Inca foams.}
\end{table}

\subsection{Roseman diagrams of foams}\label{SS:Roseman}

Any embedded surface $F$ in $\mathds{R}^4$ can be drawn in $\mathds{R}^3$ by projecting $F$ onto a choice of $3$--plane $H\subset \mathds{R}^4$. We choose a generic projection so that neighbourhoods of singular points are as shown in Figure~\ref{F:BranchPoints}. Break the surface to keep track of `under' and `over' information. The resulting diagram is called a \emph{broken surface diagram} \cite{CarterKamadaSaito:04}.

\begin{figure}
\centering
\includegraphics[width=4.5in]{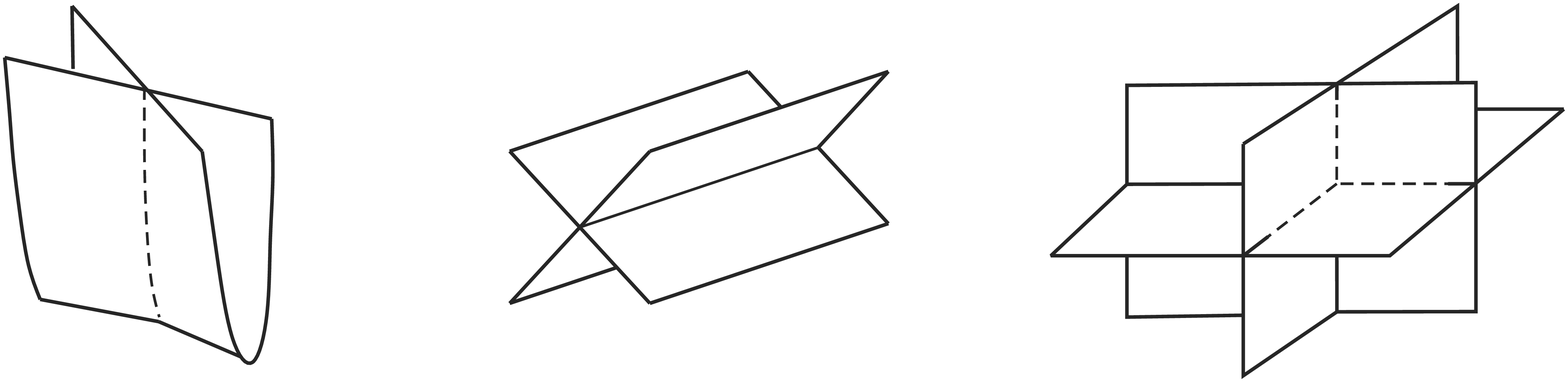}
\caption{\label{F:BranchPoints}Neighbourhoods of singular points of a generic $3$--dimensional projection of a knotted surface in $\mathds{R}^4$.}
\end{figure}

Roseman's Theorem for Foams provides a collection of $15$ local moves on broken surface diagrams so that any two $2$--foams are ambient isotopic if and only if any broken surface diagram of one is related to any broken surface diagram of the other by a finite sequence of these \emph{Carter--Roseman moves} \cite{HommaNagase:85,CarterSaito:93,Roseman:98,Carter:12}.

Two Roseman diagrams are (stably) equivalent  if a pair of $2$--foams which they represent are (stably) equivalent.

\subsection{Roseman diagrams of sphere and interval tangles}\label{SS:SITangle}

Our next diagrammatic formalism allows us to ignore the Carter--Roseman moves which we do not need in our context because our disks are disjoint and our spheres have no local knotting. By convention, when we say \emph{Roseman diagram} without further specification, what we meet is a Roseman diagram of a sphere and interval tangle as defined in this section.

\begin{definition}[Sphere and Interval Tangle]
A \emph{connected sphere and interval tangle} is a union \[L\ass L_1\sqcup L_2\sqcup \cdots\sqcup L_\nu,\] of disjointly embedded objects in standard Euclidean $\mathds{R}^4$ defined as follows:
\begin{itemize}
\item A set $S_1,S_2,\ldots, S_k$ of $2$--spheres embedded in $\mathds{R}^4$ such that there exist disjointly embedded closed $2$--balls $B_1,B_2,\ldots B_k$ with $\partial B_j = S_j$ for $j=1,2,\ldots k$.
\item Identifying $S^4\simeq \mathds{R}^4\cup \set{\infty}$, a set of closed intervals $I_1,I_2,\ldots, I_m$ disjointly embedded in $S^4$ such that each interval endpoint lies on a sphere. We allow no other intersections between intervals and spheres. Write $L_i \ass \bigcup_{j=1}^k I_j\cup S_j$. Only $I_m$ may pass through the point $\infty$, and if it does then we call $L_i$ \emph{open} (because $I_m$ splits into two rays when we restrict to $\mathds{R}^4$) otherwise we call it \emph{closed}.
\end{itemize}
A \emph{sphere and interval tangle} is a union of connected sphere and interval tangles which may intersect one another only at the point $\infty$.
\end{definition}

\emph{Stabilization} of a sphere and interval tangle is:

\begin{equation}
\begin{minipage}{0.6cm}\ \,\includegraphics[height=1.8cm]{vline}\end{minipage}\longleftrightarrow\quad  \begin{minipage}{0.6cm}\includegraphics[height=2cm]{si_agent}\end{minipage}
\end{equation}

Sphere and interval tangles also admit Roseman diagrams. Their equivalence is defined as follows.

\begin{definition}[Equivalence of Roseman diagrams of sphere and interval tangles]
Two Roseman diagrams of sphere and interval tangles are \emph{equivalent} if they are related by a finite sequence of the local moves of Figure~\ref{F:local_roseman}. They are \emph{stably equivalence} if they are related by a finite sequence of these moves and (de)stabilizations.
\end{definition}


\begin{figure}
\centering
\includegraphics[width=\textwidth]{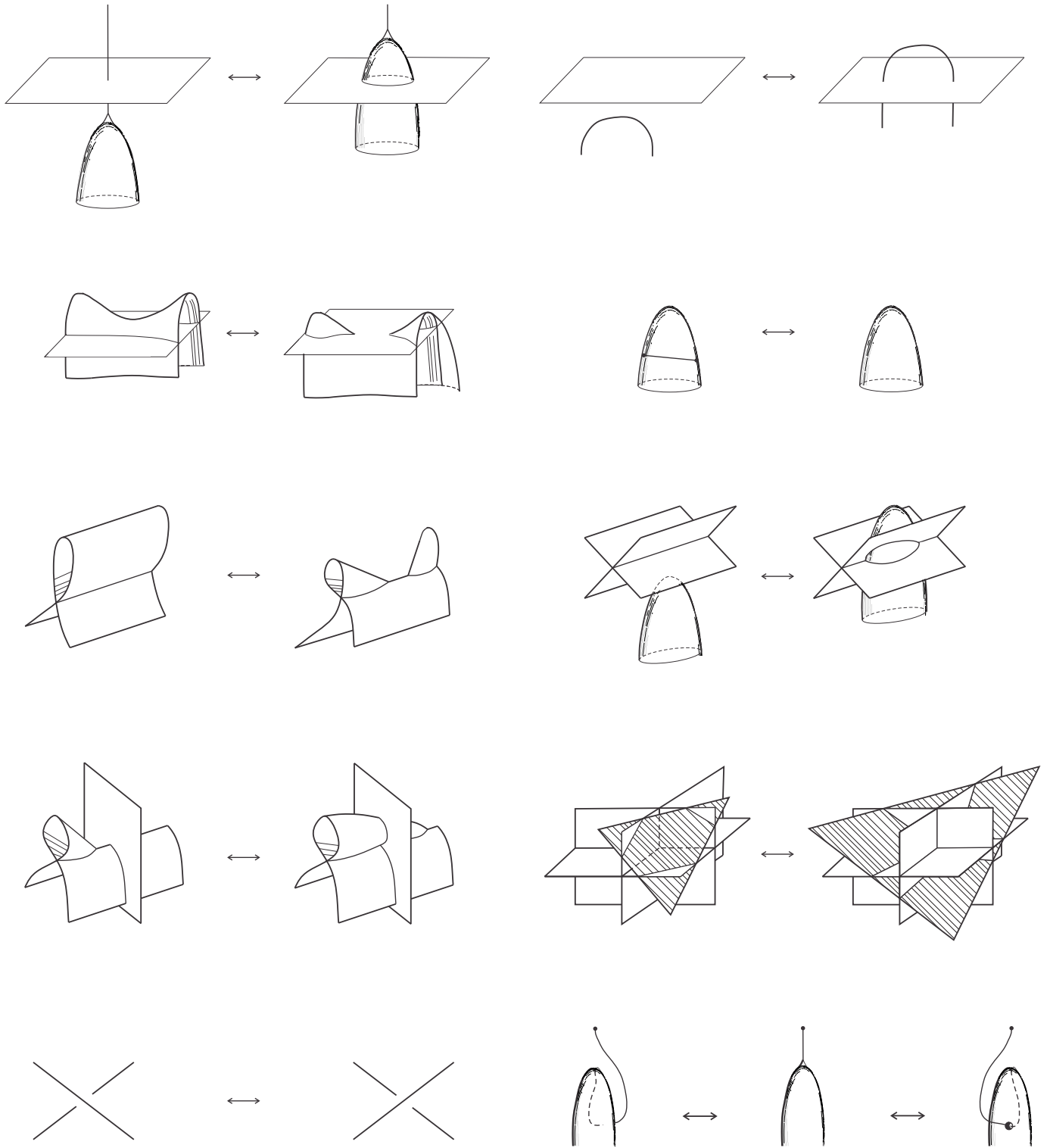}
\caption{\label{F:local_roseman}Local moves on Roseman diagrams of sphere and interval tangles.}
\end{figure}

\subsection{Rosemeister diagrams}\label{SS:Rosemeister}

The interior of the sphere in a Roseman diagram of a sphere and interval tangle plays no role except to confuse. If spheres of Roseman diagrams do not intersect, then we may crush the sphere to a disk without loss of information. One advantage of eliminating redundant sphere interiors is that intervals of Rosemeister diagrams can be coloured, with colours changing as they pass through disks- see Section~\ref{SS:UniversalRack}.

\emph{Stabilization} of a Rosemeister diagram is

\begin{equation}
\begin{minipage}{0.6cm}\ \,\includegraphics[height=1.8cm]{vline}\end{minipage}\longleftrightarrow\quad  \begin{minipage}{0.6cm}\includegraphics[height=2cm]{rosemeister_agent}\end{minipage}
\end{equation}

\begin{definition}[Equivalence of Rosemeister diagrams]
Two Rosemeister diagrams are \emph{equivalent} if they are related by a finite sequence of the local moves of Figure~\ref{F:moves4d2}. They are \emph{stably equivalence} if they are related by a finite sequence of these moves and (de)stabilizations.
\end{definition}

\begin{figure}[htb]
\psfrag{r}[c]{\small\emph{R1}}\psfrag{s}[c]{\small\emph{R2}}\psfrag{t}[c]{\small\emph{R3}}\psfrag{u}[c]{\small\emph{VR}}
\centering
\includegraphics[width=0.95\textwidth]{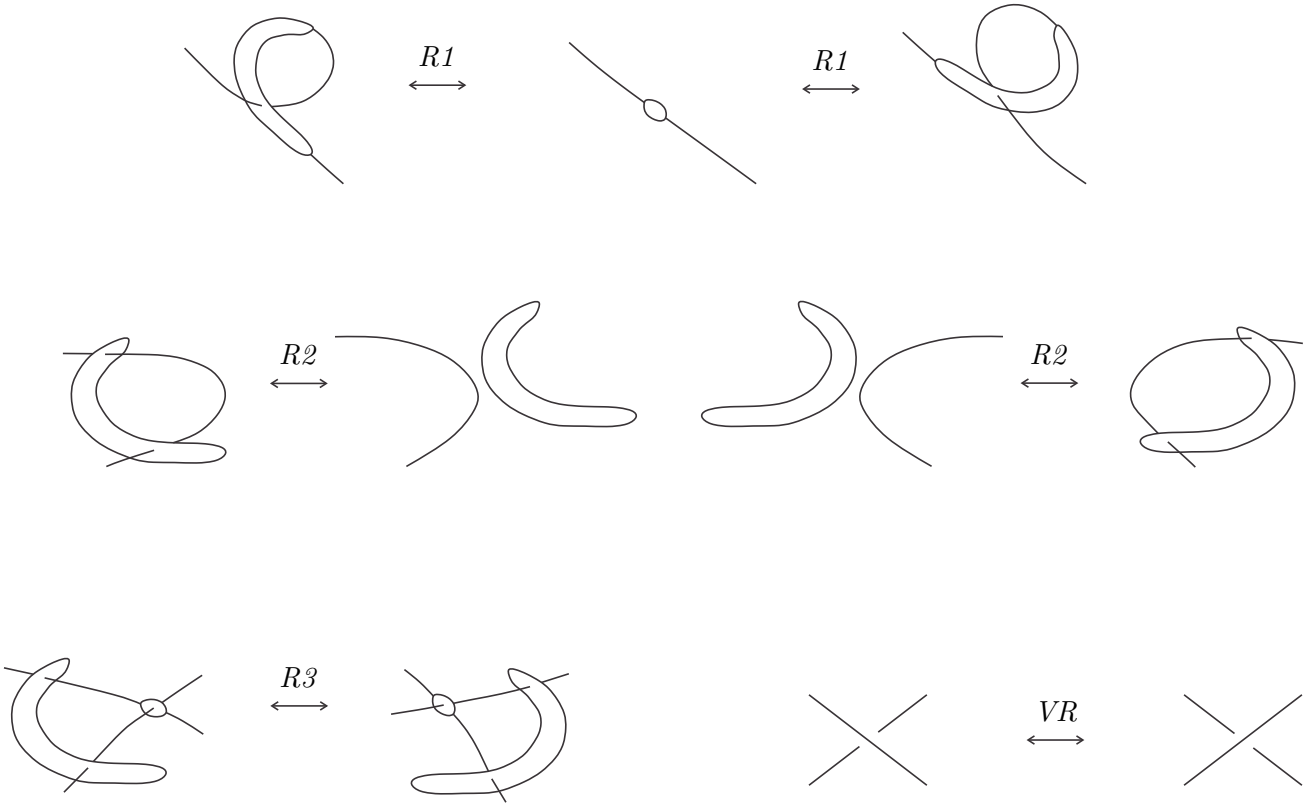}
\caption{\label{F:moves4d2} Local moves on Rosemeister diagrams of sphere-and-interval tangles.}
\end{figure}

\subsection{Reidemeister diagrams}\label{SS:Tangle}

A Reidemeister diagram of a sphere and interval tangle $T$ is a generic projection of $T$ onto a $2$--plane for which images of spheres are disjoint and are designated by thick lines. The authors find this the simplest diagrammatic formalism with which to visualize objects. Stabilization is defined as follows:

\begin{equation}
\begin{minipage}{1cm}\ \,\includegraphics[height=3cm]{vline}\end{minipage}\  \longleftrightarrow \qquad \begin{minipage}{1cm}\includegraphics[height=3cm]{Reidemeister_agent}\end{minipage}
\end{equation}

\begin{definition}[Equivalence of Rosemeister diagrams]
Two Reidemeister diagrams are \emph{equivalent} if they are related by a finite sequence of the local moves of Figure~\ref{F:local_moves_machines}, called \emph{cosmetic moves} and Figure~\ref{F:local_moves_machines1}, called \emph{Reidemeister moves}. They are \emph{stably equivalence} if they are related by a finite sequence of these moves and (de)stabilizations.
\end{definition}

\begin{figure}[htb]
\centering
\psfrag{a}[c]{$\trr$}
\psfrag{b}[c]{$\rrt$}
\psfrag{V}[c]{\small  \emph{I1}}
\psfrag{x}[c]{$x$}
\psfrag{T}[c]{\small \emph{VR1}}\psfrag{R}[c]{\small \emph{VR2}}\psfrag{S}[c]{\small \emph{VR3}}
\psfrag{Q}[c]{\small \emph{SV}}\psfrag{D}[c]{\small \emph{I2}}\psfrag{E}[c]{\small \emph{FM1}}\psfrag{F}[c]{\small \emph{FM2}}\psfrag{C}[c]{\small \emph{I3}}\psfrag{Y}[c]{\small \emph{ST}}
\psfrag{X}[c]{\small \emph{ST}}
\includegraphics[width=0.85\textwidth]{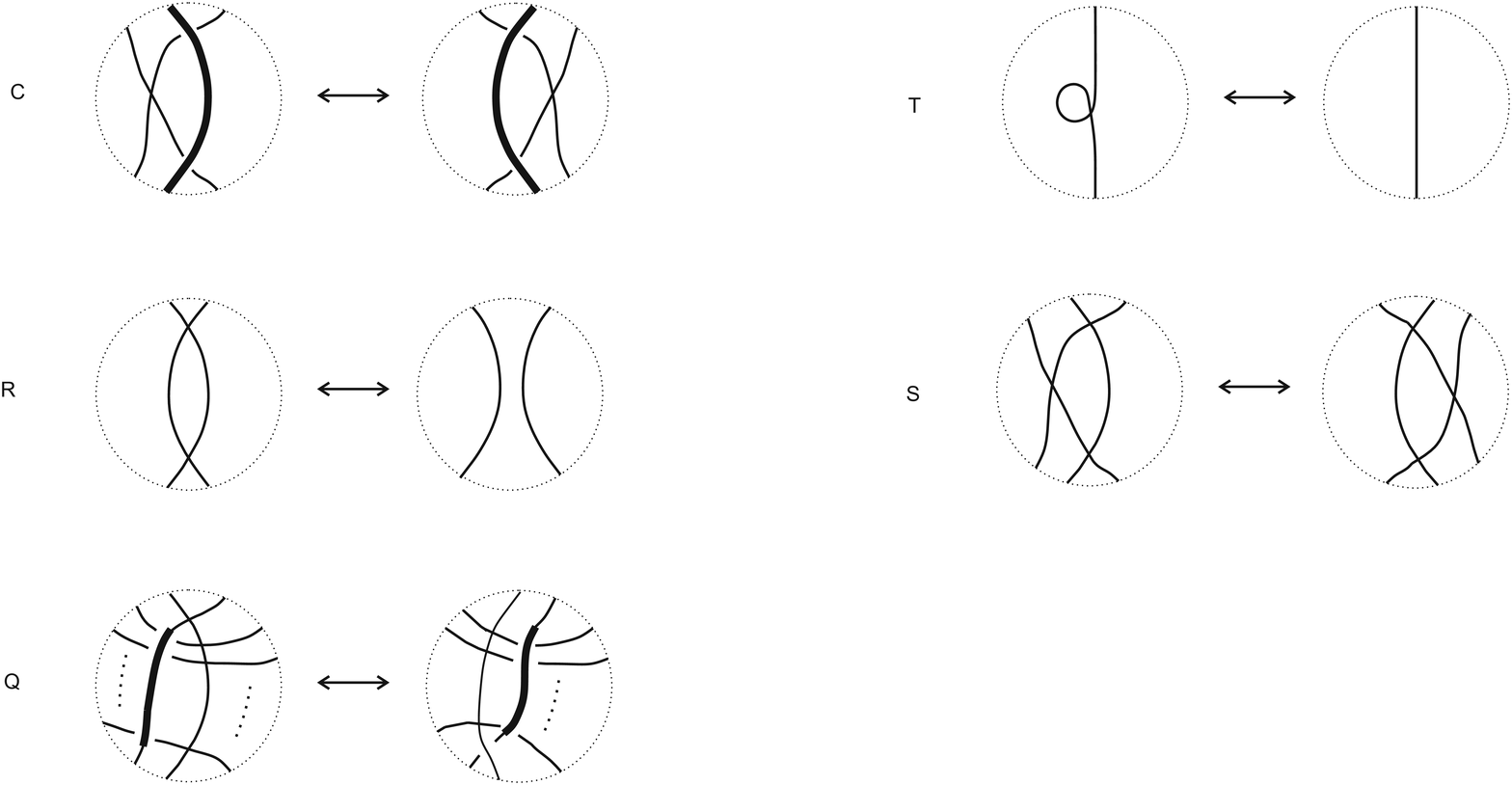}
\caption{\label{F:local_moves_machines} \small Cosmetic moves for Reidemeister diagrams. Directions are not indicated, which means that the moves are valid for all directions so long as the directions on the RHS and on the LHS match up.}
\end{figure}

\begin{figure}[htb]
\centering
\psfrag{T}[c]{\small \emph{VR1}}\psfrag{R}[c]{\small \emph{VR2}}\psfrag{S}[c]{\small \emph{VR3}}
\psfrag{Q}[c]{\small \emph{SV}}\psfrag{D}[c]{\small \emph{R1}}\psfrag{A}[c]{\small \emph{R2}}\psfrag{B}[c]{\small \emph{R3}}\psfrag{C}[c]{\small \emph{UC}}
\psfrag{X}[c]{\small \emph{ST}}
\includegraphics[width=0.85\textwidth]{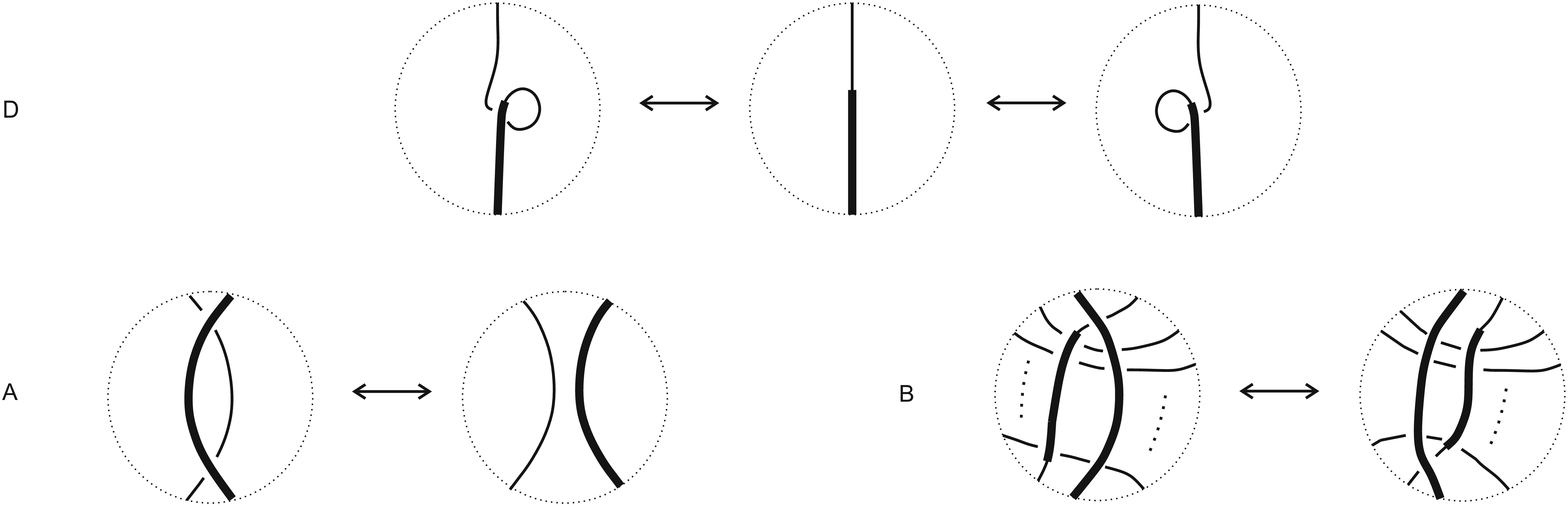}
\caption{\label{F:local_moves_machines1} \small Reidemeister moves for Reidemeister diagrams, valid for any directions of the thick lines (directions on the LHS and on the RHS must match up).}
\end{figure}

\subsection{Gau{\ss} Diagrams}\label{SS:Gauss}

Our final diagrammatic formalism is combinatorial and is based on labeled graphs. It is minimal and as such it's the simplest to use for defining some invariants.

\begin{definition}[Gau{\ss} diagram of an Inca foam]\label{D:TangleMachine}
A \emph{Gau{\ss} diagram of an Inca foam} is a triple $M\ass (G,S,\phi)$ consisting of:
\begin{itemize}
\item A finite graph $G$ that is a disjoint union of path graphs $P_1,\ldots, P_k$  and cycles $C_1,\ldots,C_l$:
\begin{equation}
G\,\ass\, \left(P_1\dU P_2\dU \cdots\dU P_k\right)\dU \left(C_1\dU C_2\dU\cdots\dU C_l\right),
\end{equation}
The graph $G$ is called the \emph{underlying graph} of $M$. 
\item A subset of registers $S\subseteq V(G)$ called \emph{agents}.
\item A multivalued \emph{interaction function} $\phi\colon\, S\Rightarrow E(G)\times \set{\leftarrow,\rightarrow}$ specifying the edges acted on by each agent and a direction $\leftarrow$ or $\rightarrow$.
\end{itemize}
\end{definition}

Two Gau{\ss} diagrams $M_1$ and $M_2$ are considered \emph{equivalent} if they are related by a finite sequence of the following \emph{Reidemeister moves}:

\subsubsection*{Reidemeister \textrm{I}} Here, directions of edges $\leftarrow$ or $\rightarrow$ are arbitrary:
 \begin{equation}
\centering
\raisebox{-0.5\height}{\includegraphics[width=0.75\textwidth]{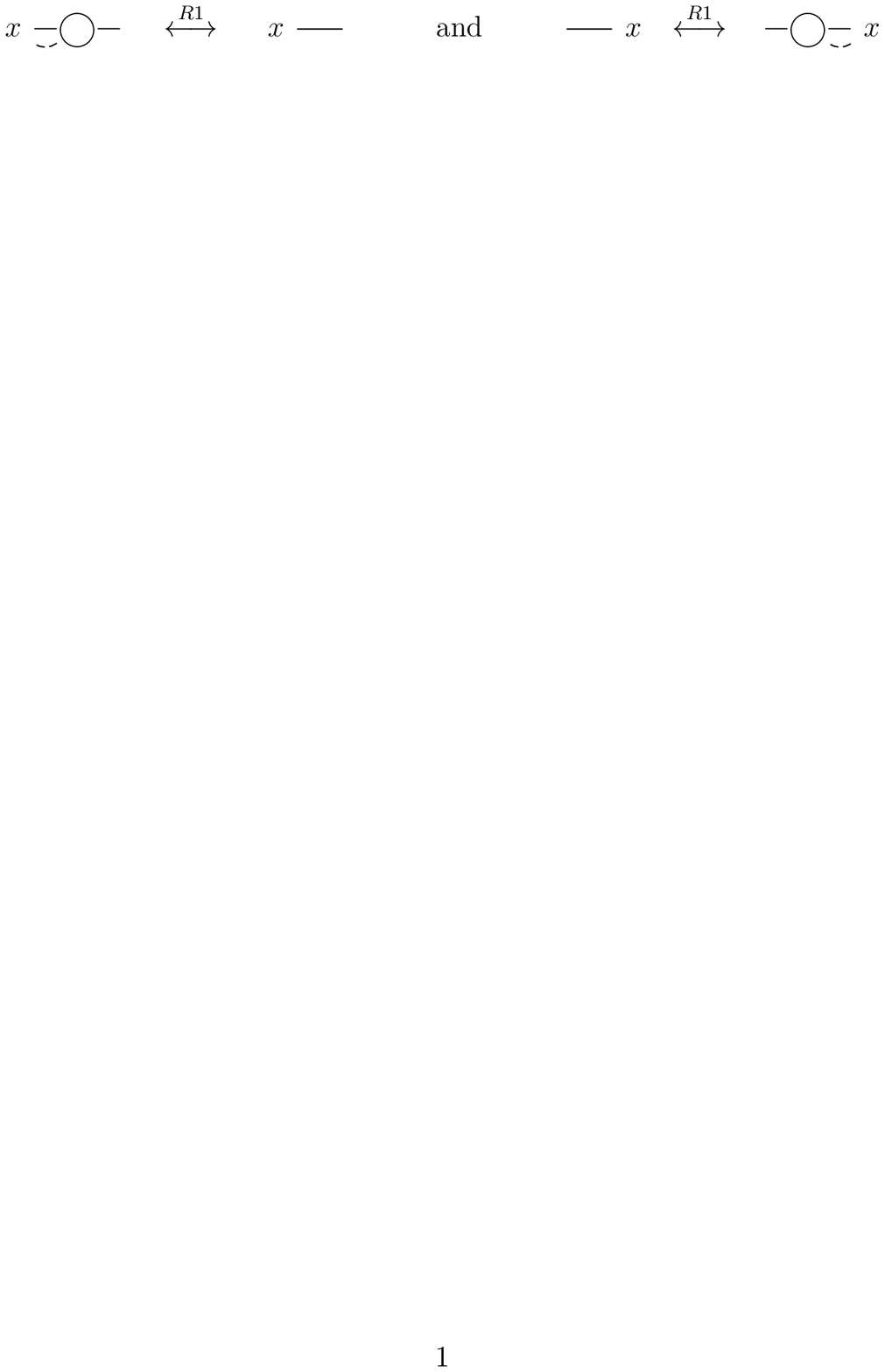}}
\end{equation}

\subsubsection*{Reidemeister \textrm{II}} In following local modification, the top central vertex must be outside the set of agents $S$.
\begin{equation}
\centering
\raisebox{-0.5\height}{\includegraphics[width=0.9\textwidth]{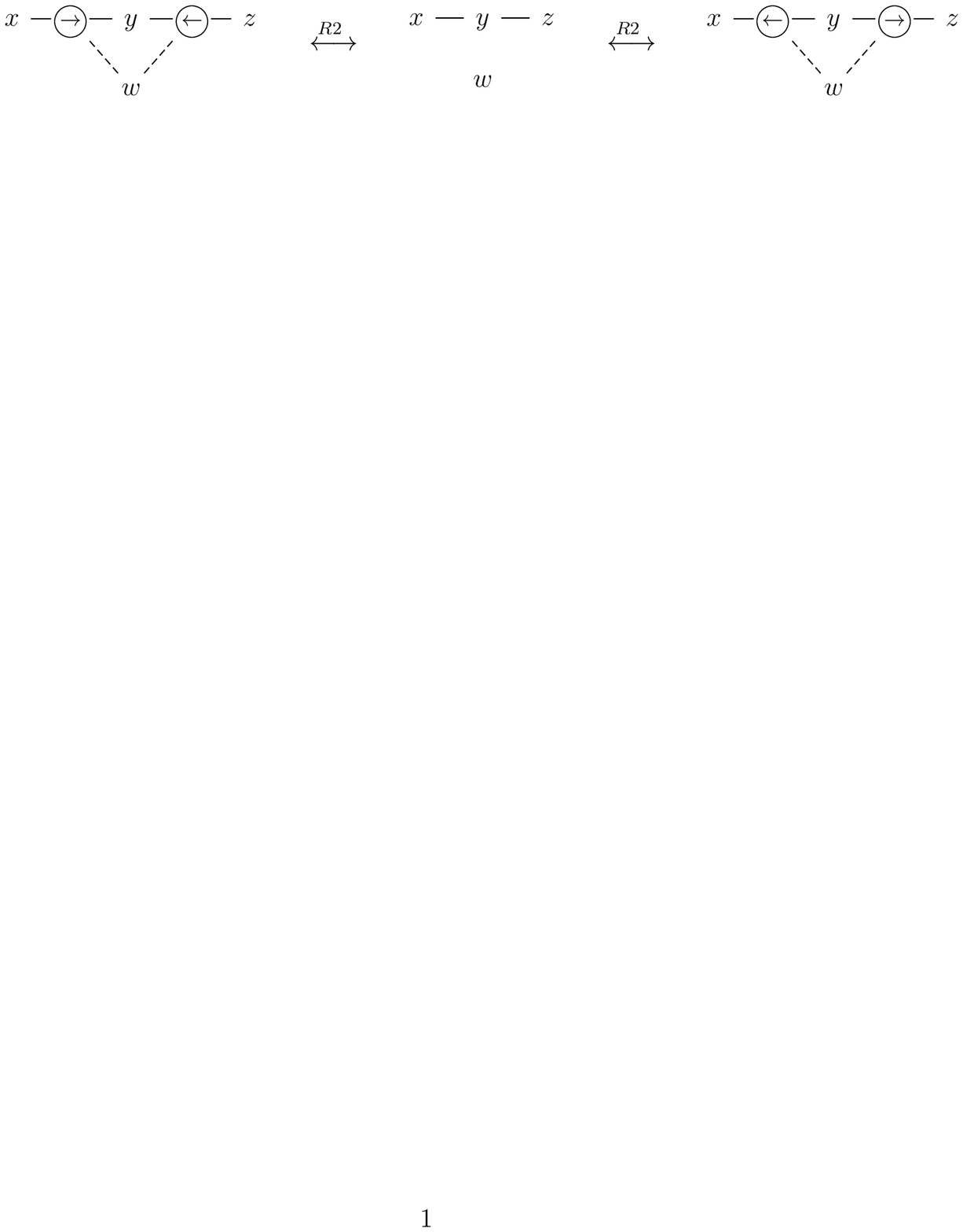}}
\end{equation}

\subsubsection*{Reidemeister \textrm{III}} All edges in $\phi(e)$ in the expression below must participate in the move (the move is invalid for a strict subset of them). Directions $\leftarrow$ or $\rightarrow$ are arbitrary but should correspond on the left and right as indicated in the example below:
\begin{equation}
\centering
\raisebox{-0.5\height}{\includegraphics[width=0.6\textwidth]{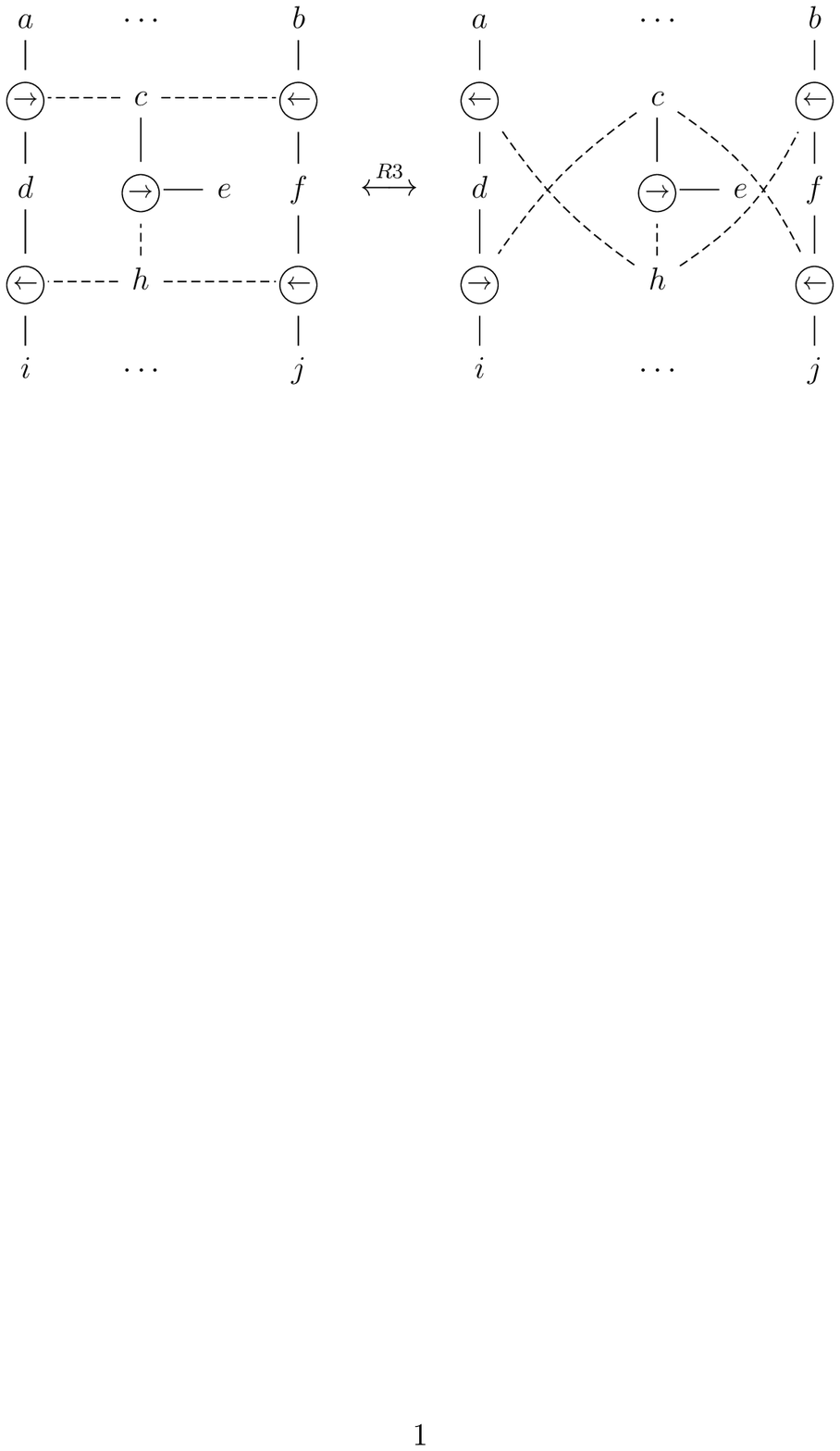}}
\end{equation}

The following move is called \emph{stabilization}, where one of the registers on the LHS must lie outside the image of $\phi$:

\begin{equation}
\centering
\raisebox{-0.5\height}{\includegraphics[width=0.27\textwidth]{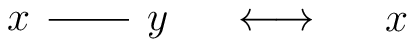}}
\end{equation}

By convention, the stabilization of a single vertex is a $2$--vertex line graph.



\begin{definition}[Equivalence of Gau{\ss} diagrams]
Two Gau{\ss} diagrams are \emph{equivalent} if they are related by a finite sequence of \emph{Reidemeister moves}. They are \emph{stably equivalence} if they are related by a finite sequence of these moves and (de)stabilizations.
\end{definition}

\section{Proof of equivalence}\label{S:FormalismEquivalence}

The goal of this section is the following theorem:

\begin{theorem}\label{T:Equivalence}
Stable equivalence classes of all of the diagrammatic formalisms in Section~\ref{S:FiveDescriptions} are in bijective correspondence with stable equivalence classes of Inca foams.
\end{theorem}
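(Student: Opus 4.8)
The plan is to establish the theorem through a chain of bijections between consecutive formalisms in Table~\ref{T:diagrams}, passing from the most geometric description to the most combinatorial one: Inca foams $\leftrightarrow$ Roseman $\leftrightarrow$ Rosemeister $\leftrightarrow$ Reidemeister $\leftrightarrow$ Gau\ss{} diagrams. Since equivalence of formalisms is transitive, it suffices to exhibit, for each adjacent pair, a map in each direction that is well defined on diagrams, descends to stable equivalence classes, and is mutually inverse on those classes. In every step the substance is a \emph{move-by-move} verification: one checks that the generating local moves and the stabilization of one formalism are carried to (compositions of) the generating moves and stabilization of the other, in both directions. By Roseman's Theorem for Foams (Section~\ref{SS:Roseman}) the leftmost equivalence relation is exactly ambient isotopy of $2$--foams, so any such chain of bijections yields the stated correspondence with Inca foams.

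For the first step I would make the necklace structure of Definition~\ref{D:Necklace} explicit: each component $K_{k_i}$ is a solid torus whose core circle $\gamma$ meets the gluing discs $D_1,\dots,D_{k_i}$ transversely, and the spheres $S_j=\partial B_j$ of the definition pairwise share the discs $D_j$. Cutting $\gamma$ at the discs and recording the arcs $\gamma\cap B_j$ exhibits the foam as a union of embedded spheres joined along interval arcs, i.e.\ a sphere and interval tangle; the open/closed dichotomy matches exactly the requirement that only $I_m$ may pass through $\infty$. The content here is that the hypotheses of Definition~\ref{D:Necklace}---disjoint discs, unknotted spheres bounding disjoint balls, no disc--disc intersections---annihilate precisely those Carter--Roseman moves that are not reproduced in Figure~\ref{F:local_roseman}, so the restricted move set is complete for this subclass. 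The second step, Roseman $\leftrightarrow$ Rosemeister, crushes each sphere to a disc; this is lossless because every sphere bounds a ball disjoint from the rest of the diagram, and one checks that the moves of Figure~\ref{F:local_roseman} correspond to those of Figure~\ref{F:moves4d2} (with \emph{R1}, \emph{R2}, \emph{R3} the surviving Roseman moves and \emph{VR} the virtual move produced by separated disc sheets).

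The third step, Rosemeister $\leftrightarrow$ Reidemeister, is the heart of the argument and amounts to a Reidemeister-type theorem in $\mathds{R}^3$: I would fix a generic projection of the (crushed) diagram to a $2$--plane, drawing sphere images as thick lines, and argue as in the classical and welded settings that two projections of ambient-isotopic objects differ by a finite sequence of the cosmetic moves of Figure~\ref{F:local_moves_machines} together with the Reidemeister moves of Figure~\ref{F:local_moves_machines1}. Soundness (each listed move is realized by an isotopy of the Rosemeister diagram) is a direct check; completeness (no move is missing) requires tracking the codimension-one and codimension-two singularities of a generic one-parameter family of projections and matching each elementary transition to one of the listed moves, with the thick-line crossings handled by a detour argument as for virtual/welded strands. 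The final step, Reidemeister $\leftrightarrow$ Gau\ss, is the combinatorial encoding of Definition~\ref{D:TangleMachine}: strands become the path and cycle graphs $P_i, C_j$, the spheres (disks) that act on strands become the agents $S$, and the crossing data becomes the interaction function $\phi$ with its directions; the moves \emph{R1}, \emph{R2}, \emph{R3} and stabilization then translate termwise into the Gau\ss\ Reidemeister moves I, II, III and the Gau\ss\ stabilization, the side conditions (the central vertex lying outside $S$, or a register lying outside the image of $\phi$) being exactly the planar genericity conditions read off the Reidemeister diagram.

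The main obstacle is the completeness half of the Rosemeister $\leftrightarrow$ Reidemeister step: proving that the finite list in Figures~\ref{F:local_moves_machines} and~\ref{F:local_moves_machines1} really generates every ambient isotopy, rather than merely being realized by isotopies. This is the analogue of proving Roseman's theorem (or the welded Reidemeister theorem) from scratch, and the care lies in enumerating the generic transitions of a projection and showing none produces a relation outside the list, while simultaneously keeping the bookkeeping of the point $\infty$, the open/closed distinction, and the direction labels consistent. A secondary but pervasive point is \emph{compatibility of (de)stabilization across all four bijections}: each stabilization picture in Table~\ref{T:diagrams} must be sent to the next, so that the maps are bijections not merely of equivalence classes but of \emph{stable} equivalence classes. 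I would verify this by checking that the single stabilization generator of each formalism maps to the stabilization generator of its neighbour, which then forces compatibility of the induced stable equivalence relations.
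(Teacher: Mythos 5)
Your architecture differs from the paper's in a way that matters. You propose a linear chain with \emph{bidirectional}, move-by-move correspondences at every adjacent pair, which forces you to prove a completeness theorem at the Rosemeister $\leftrightarrow$ Reidemeister step (and again, implicitly, in your claim that the hypotheses of Definition~\ref{D:Necklace} ``annihilate precisely'' the unused Carter--Roseman moves). The paper never proves any such completeness statement from scratch. Instead it proves Gau{\ss} $\Leftrightarrow$ Reidemeister (citing prior work) and Reidemeister $\Leftrightarrow$ Rosemeister (by an explicit local model built directly in $\mathds{R}^4$: a disc with intervals passing through it, concatenated inside disjoint cubes), and then closes a \emph{one-directional cycle} Rosemeister $\Rightarrow$ Roseman $\Rightarrow$ Inca foam $\Rightarrow$ Rosemeister. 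Because each map in the cycle is well defined on stable equivalence classes and the composite returns the original class, all three are bijections; the only completeness input ever needed is the cited Roseman/Carter theorem (two projections of the same foam differ by Carter--Roseman moves), which is consumed at the Inca foam $\Rightarrow$ Rosemeister step. The genuinely technical content of the paper sits there: a deformation retraction of the bounding balls controlled by stratified Morse theory, a classification of the critical points (Figure~\ref{F:cp}) showing that passing each one effects an R2 or R3 move, and the Diamond Lemma to see that the result is independent of the order in which balls are shrunk.

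The gap in your plan is therefore concrete: the step you yourself flag as ``the main obstacle''---tracking codimension-one and codimension-two singularities of generic one-parameter families of projections and matching every transition to the finite list in Figures~\ref{F:local_moves_machines} and~\ref{F:local_moves_machines1}---is not a routine verification but a theorem of the same order of difficulty as the statement being proved, and your proposal does not supply it. The same applies to your first step: an ambient isotopy of Inca foams may a priori pass through configurations whose diagrams change by Carter--Roseman moves outside the restricted list of Figure~\ref{F:local_roseman}, and asserting that the restrictions of Definition~\ref{D:Necklace} rule this out is exactly what needs an argument (the paper supplies it via the Morse-theoretic retraction, which is arranged so that every intermediate stage is again an Inca foam or a sphere-and-interval tangle). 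If you want to salvage your approach without reproving a Roseman-type theorem, adopt the paper's trick: make the hard directions one-directional, arrange them in a cycle, and let the cycle closing up do the work of surjectivity and injectivity simultaneously.
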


\begin{proof}
\begin{description}
\item[$\text{Gau{\ss} } \Leftrightarrow \text{ Reidemeister}$]

This equivalence was proven in \cite{CarmiMoskovich:15a}. To obtain a Reidemeister diagram from a Gau{\ss} diagram, first destabilize until each edge is in the $\phi$--image of some agent. Then replace interactions as follows:

\begin{equation}\label{E:kebab}
\centering
\raisebox{-0.5\height}{\includegraphics[width=0.8\textwidth]{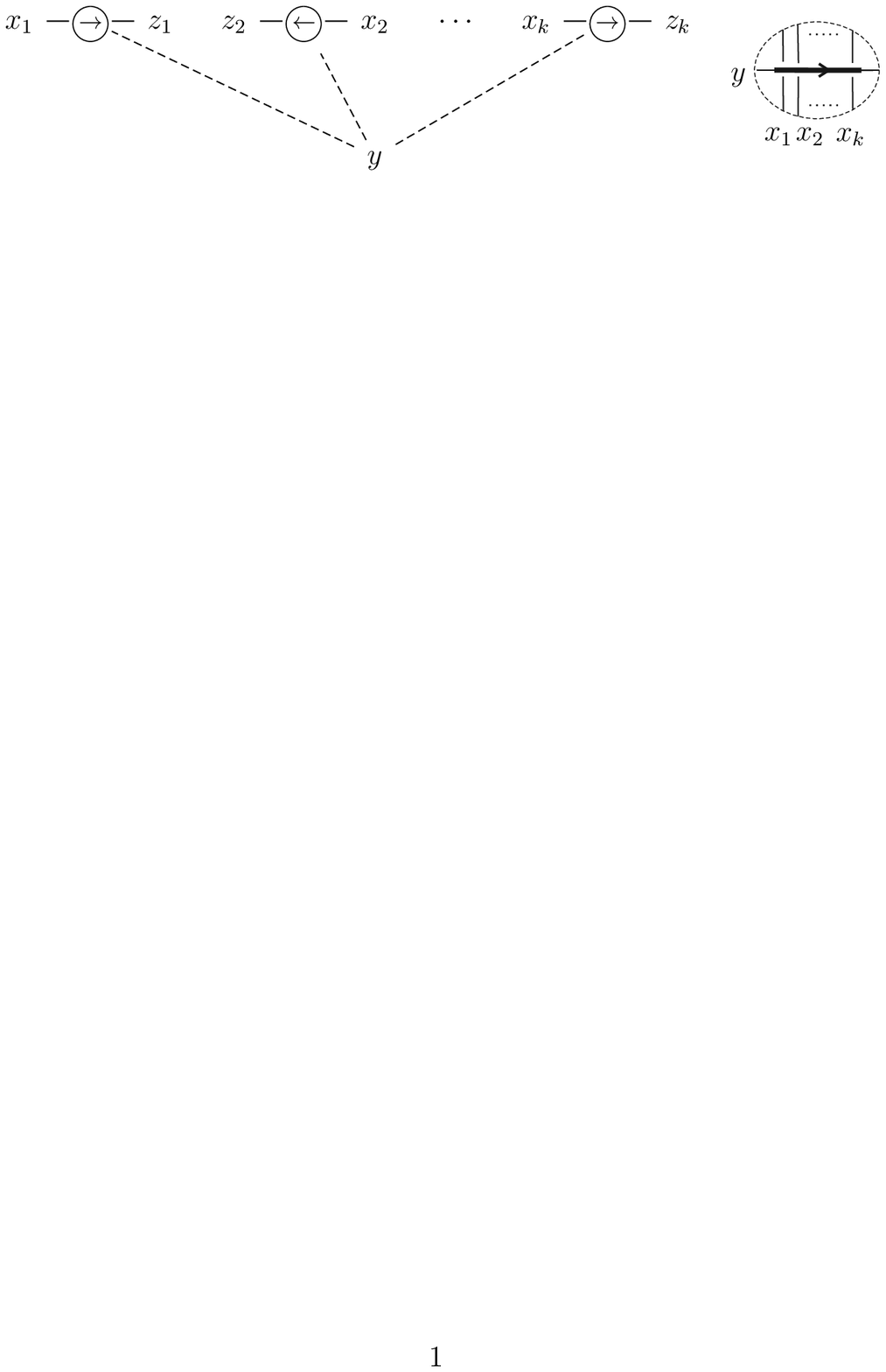}}
\end{equation}

The indeterminacy in the translation from Gau{\ss} diagram interactions to tangle diagram interactions is captured by moves $I1$, $I2$ and $I3$ in Figure~\ref{F:local_moves_machines}.

Then concatenate as dictated by the graph, as in Figure~\ref{F:linking}. The indeterminacy in doing this is captured by moves $V\!R1$, $V\!R2$, $V\!R3$, and $SV$ in Figure~\ref{F:local_moves_machines}. Once tangle endpoints have been `sent to infinity', there are no further indeterminacies.

Reidemeister moves on Gau{\ss} diagrams correspond to Reidemeister moves on Reidemeister diagrams by construction.

\begin{figure}
\includegraphics[width=0.8\textwidth]{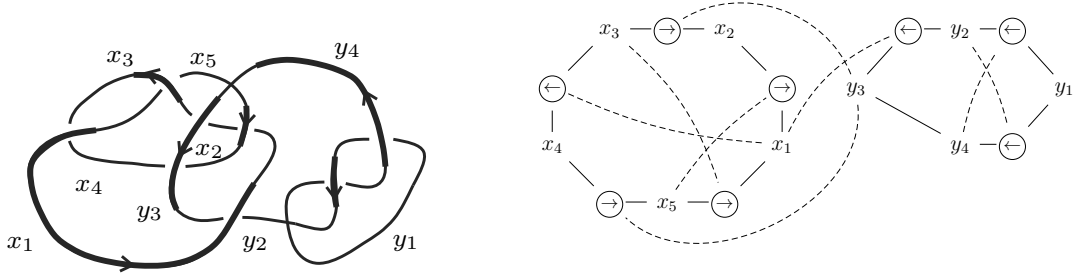}
\caption{\label{F:linking}A Gau{\ss} diagram and corresponding Reidemeister diagram.}
\end{figure}

\item[$\text{Reidemeister } \Leftrightarrow \text{ Rosemeister}$]

Begin by constructing a local model for a single interaction, consisting of a single over-strand $A$ with $k$ strands passing up through it and $l$ strands passing down through it. Consider a $2$--disc in Euclidean $\mathds{R}^4$:

\begin{equation}
D\ass \left\{\left.\rule{0pt}{11pt}(x,0,z,0)\in \mathds{R^4}\right|\, \sqrt{x^2+z^2}=1\right\}.
\end{equation}

Orient the boundary of $D$ counterclockwise. The disk $D$ represents the over-strand (the agent) $A$. We usually draw $D$ pointy at the ends for aesthetic reasons.

Pass through $D$ parameterized intervals $l_j^t$ with $t\in[-2,2]$ so that:
\begin{equation}
l_j^t \ass \left\{
             \begin{array}{ll}
               (\frac{j+1}{l+k+2},t,0,1), & \hbox{for $0<j\leq k$;} \\
               (\frac{j+1}{l+k+2},-t,0,1)\rule{0pt}{12pt}, & \hbox{for $k<j\leq l+k$.}
             \end{array}
           \right.
\end{equation}
Thus, an under-strand passing ``up'' through $A$ corresponds to an interval passing up through $D$, and vice versa. Finally, adjoin two parameterized intervals $l_A^+\ass (0,0,1+t,0)$ and  $l_A^-\ass (0,0,-2+t,0)$ of length $1$.

Concatenate as dictated by the graph. At this point, the $4$--dimensional figure that we have constructed, which consists of disks $D_1,D_2,\ldots, D_N$ and of intervals, lies inside a collection of $4\times 4\times 4\times 4$ cubes $B_1,B_2,\ldots,B_N$. We index these so that $D_i$ lies inside $B_i$ for all $1\leq i\leq N$., and embed the cubes disjointedly in $\mathds{R}^4$. Concatenate by connecting endpoints of intervals on the boundaries of the cubes (these are endpoints of $l_j$ intervals and of $l_A$ intervals) to one another, corresponding to how the registers which represent them connect with one another in $M$. The embedding should be chosen so that the concatenation of two smooth embedded intervals is again a smooth embedded interval. Line segments added for the purpose of concatenation should lie entirely outside $B_1,B_2,\ldots,B_k$, and should not intersect.

Finally, for each intersection $p$ of one of the intervals $l_j$ or $l_A^\pm$ with the boundary $\partial B$ of a cube $B$, endpoints of $l_j$ intervals or of $l_A$ intervals which have not been used for concatenation embed a ray into $\mathds R^4$ so that its endpoint maps to $p$ and its open end gets sent to $\infty$, requiring again that it not intersect any of the other geometric objects which we have placed.

The indeterminacy in choosing concatenation lines is covered by the move which allows us to pass one interval through another in the $3$--dimensional Rosemeister diagram. Reidemeister moves on Reidemeister diagrams and Reidemeister moves on Rosemeister diagrams correspond.

\item[$\text{Rosemeister}  \Rightarrow \text{ Roseman}$]

To obtain a Roseman diagram from a Reidemeister diagram, replace each disk $D$ by a sphere parameterized as:
\begin{equation}\label{E:etrog}
S\ass \left\{\left.\rule{0pt}{11pt}(\sigma(z)x,\sigma(z)y,z,0)\in \mathds{R^4}\right|\, -1\leq x\leq 1;\ \sqrt{y^2+z^2}=1\right\}.
\end{equation}
\noindent where $\sigma\colon\, [-1,1]\to [0,1]$ is a modified logistic function $\frac{1}{2}+\frac{1}{2}\tanh\left(\tan(\frac{\pi}{2}z)\right)$ for $x\in (-1,1)$ and with $\sigma(\pm 1)\ass 0$. We choose this parameterization so as to make a Roseman diagram into a $3$--dimensional projection of a stratified space in order for smooth ambient isotopy of such objects to be well-defined \cite{GoreskyMacPherson:88}. Thus, an interaction in the resulting Roseman diagram looks as in Figure~\ref{F:kebaby}.

Each local move on a Rosemeister diagram corresponds to a fixed finite sequence of Roseman moves on a Roseman diagram.

\begin{figure}[htb]
\begin{minipage}[t]{1\linewidth}
\psfrag{s}[c]{\small $S$}
\psfrag{a}[c]{\small $l_A^+$}
\psfrag{b}[c]{\small $l_A^-$}
\centering
    \includegraphics[width=0.3\textwidth]{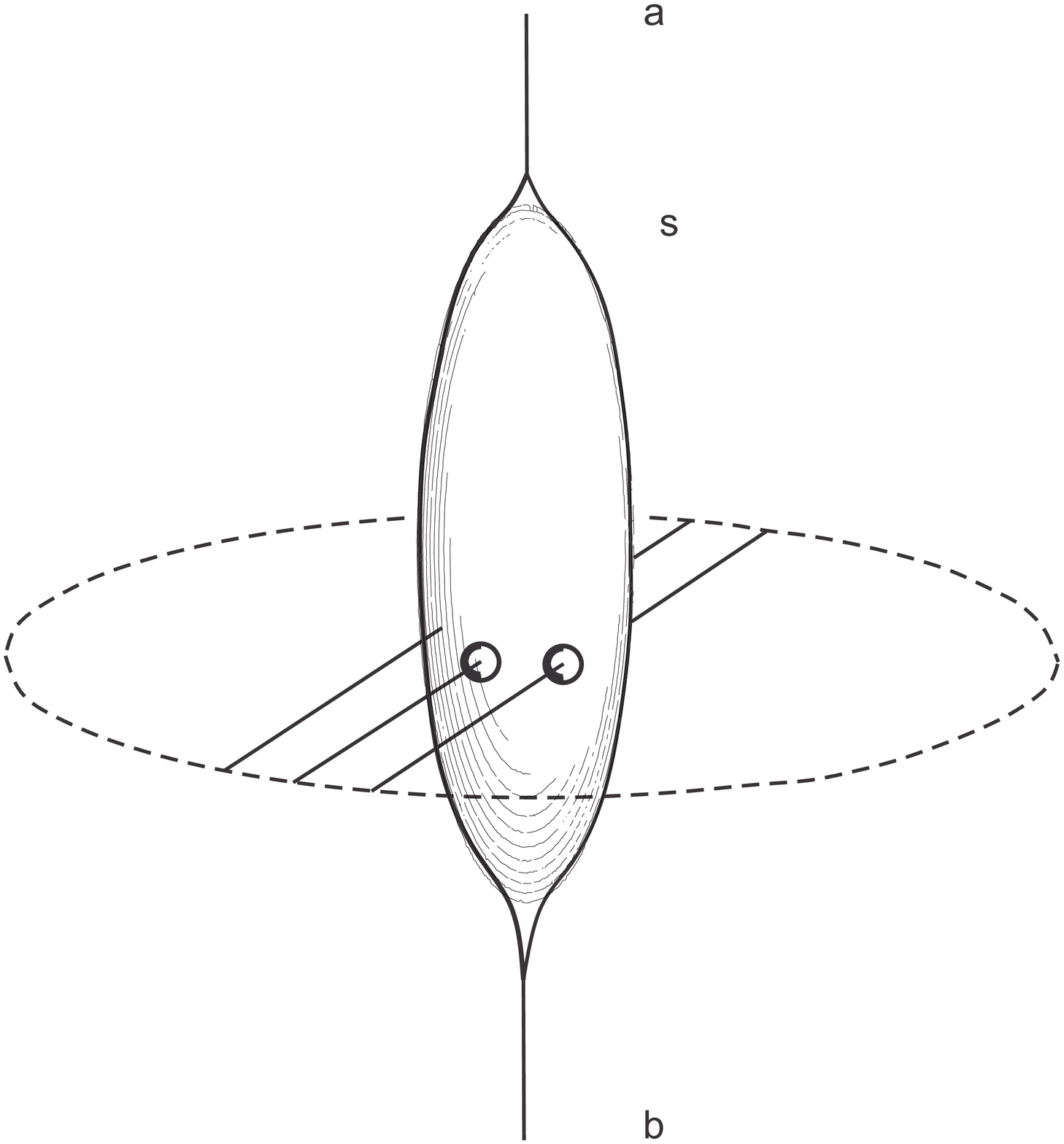}
\end{minipage}
\caption{\label{F:kebaby} A Roseman diagram corresponding to a single interaction.}
\end{figure}

\item[$\text{Roseman}  \Rightarrow \text{ Inca foam}$]

Replace intervals by narrow cylinder with a disc in it. More precisely, replace each interval component $l\colon\, [0,1]\to H$ by the boundary of a embedded cylinder $l_c\colon D^2\times [0,1]\to H$ together with the disc $l_c(D^2\times 1/2)$ as in Figure~\ref{F:construction}. The moves on Roseman diagrams of sphere and interval tangles are restrictions of the set of moves on Roseman diagrams of Inca foams.

\begin{figure}
\centering
\includegraphics[width=0.5\textwidth]{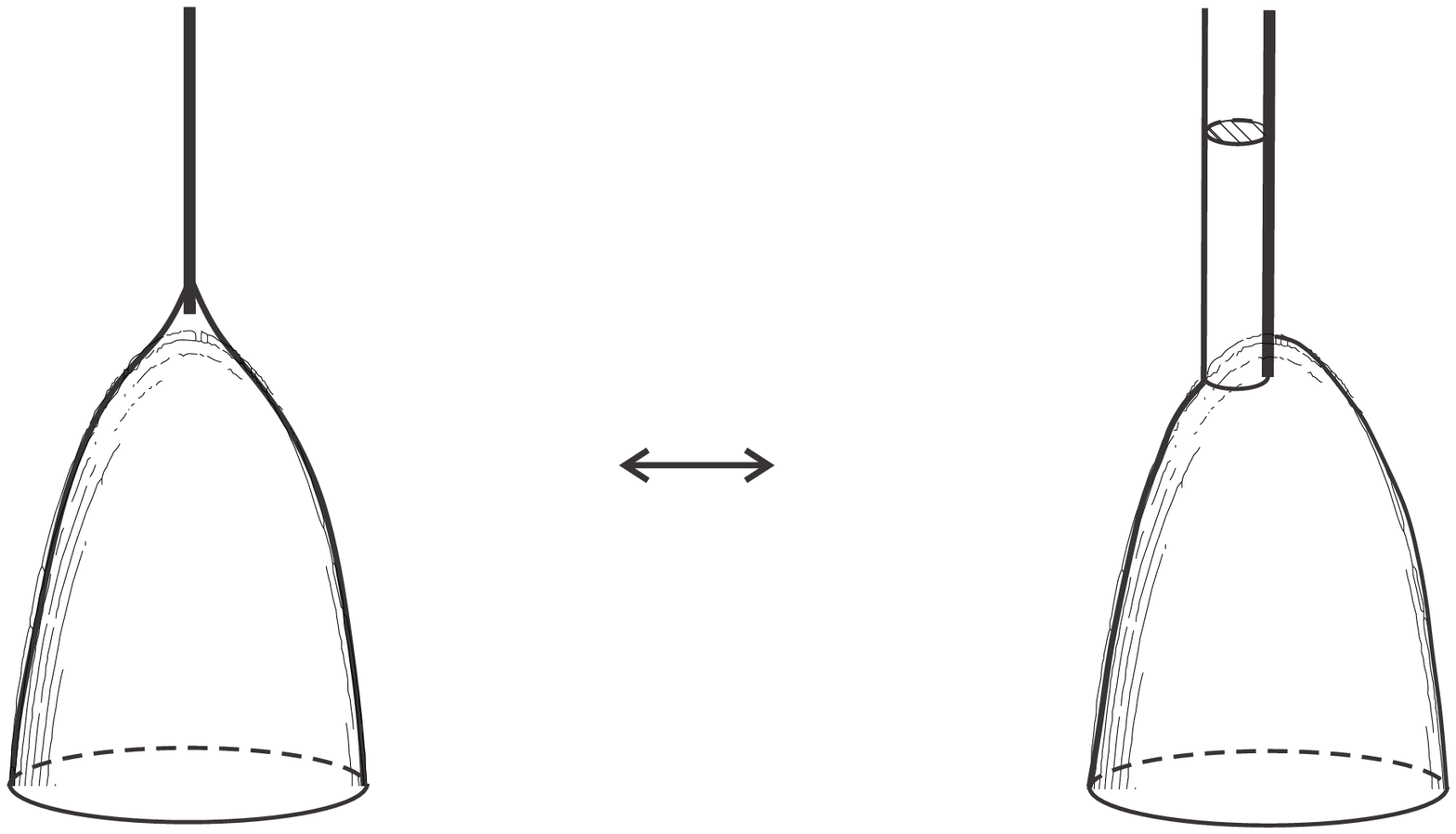}
\caption{\label{F:construction}}
\end{figure}

\item[$\text{Inca foam}  \Rightarrow \text{ Rosemeister}$]

Choose a $0$--cell $p$ inside balls $B$ of the Inca foam $K$. For two balls $B$ and $B^\prime$ which intersect at a disc $D$, join their points $p$ and $p^\prime$ by an embedded $1$--cell in $B\cup B^\prime$ which passes once transversely through $D$. Together, these points and intervals form a $1$--complex $G$ of embedded cycles one of which may pass through $\infty$. The union of balls of $K$ deformation retracts onto a union of intervals and small non-intersecting balls around the points $p_1,p_2,\ldots, p_k$. This retraction may be performed so that at each step we have either an Inca foam or a sphere-and-interval tangle (where an interval might have length $0$).

For concreteness we carry out the contraction via the following procedure. Choose a stratified Morse function $f$ for $\accentset{\circ}{I}\cap B$ \cite{GoreskyMacPherson:88}. By compactness, $\pi B$ contains images of a finite number of critical points of $f$. Inside a small neighbourhood, each critical point is of one of the forms in Figure~\ref{F:cp}.

 \begin{figure}
\centering
\includegraphics[width=0.9\textwidth]{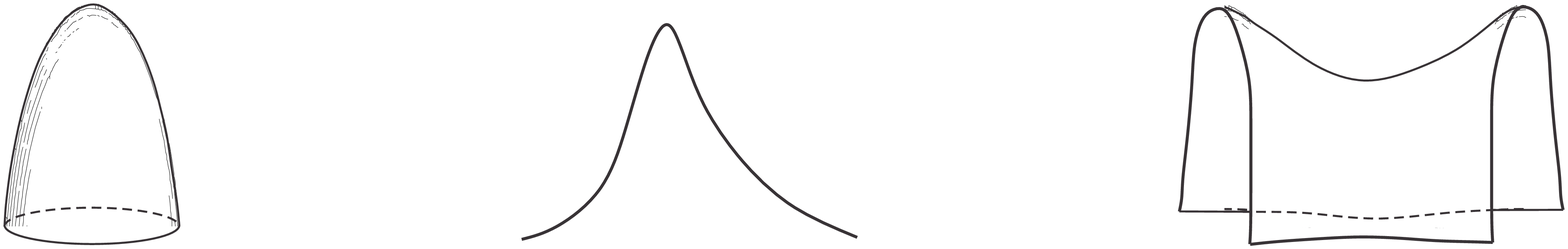}
\caption{\label{F:cp} Possible forms of critical points for a stratified Morse function of a sphere and interval tangle.}
\end{figure}

Projecting onto a hyperplane $H$ and replacing the balls by discs gives a Rosemeister diagram $D$ for $K$, and different choices of $H$ give Roseman diagrams related by Roseman moves by Roseman's Theorem. Similarly, retracting the intersection discs between the balls to points and then extending them into small intervals gives a sphere and interval tangle, and the Roseman moves on a sphere and interval tangle are the restriction of the Roseman moves on a $2$--foam.

It remains to prove that different choices of $G$ give rise to equivalent Rosemeister diagrams. Let $p$ be the point in the projection $\pi$ to $H$ of $B$ with $x$--coordinate $x\in[x_1,x_2]$ in the Roseman diagram. For sufficiently small $\epsilon>0$ there are no critical points of $f$ in $B^1\ass [x-\epsilon,x+\epsilon]\times[y_1,y_2]\times[z_1,z_2]\subseteq \pi(B)$. As we shrink $[x_1,x_2]$ to $[x-\epsilon,x+\epsilon]$, the boundary of $\pi (B)$ will cross over critical points of the image of $f$. By induction and by general position, after shrinkage this ball contains only line segments between the planes $\{x-\epsilon\}\times[y_1,y_2]\times[z_1,z_2]$ and $\{x+\epsilon\}\times[y_1,y_2]\times[z_1,z_2]$ without critical points, and also $2$--dimensional components (parts of boundaries of other balls) without critical points. Next, cut out $\pi (B^1)$, scale it to a ball $B^2$ of radius $\epsilon$ around $p$, and connect endpoints and end-lines on $\pi (\partial B^1)$ to endpoints and end-lines on $\pi (\partial B^2)$ with straight lines and broken surfaces without critical points. For sufficiently small epsilon, there will be no $2$--dimensional components intersecting $\partial B^1$. The embedded object which we obtain is independent of the order by which we shrink the balls (Diamond Lemma). We may now replace the balls by discs. Up to reparametrization this is a Rosemeister diagram.

Inside a $3$--ball, if the critical point is of a $1$--dimensional stratum and if $x\in[x_1,x_2]$ lies below it, then the local move results in a sphere-and-interval tangle whose Reidemeister diagram differs from the original by an R2~move.

 \begin{equation}
\centering
\includegraphics[width=0.6\textwidth]{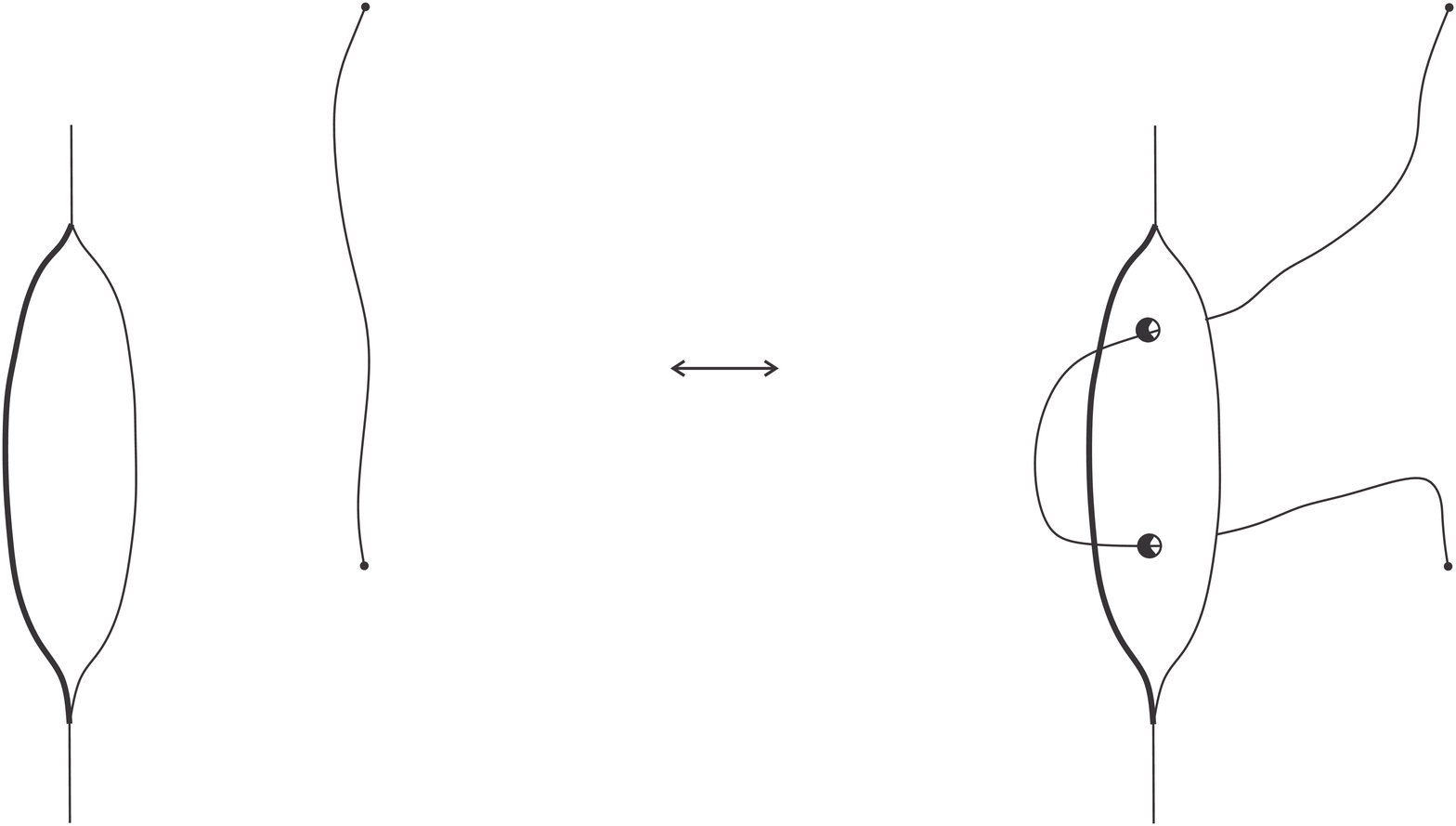}
\end{equation}

If the critical point is of a $2$--dimensional stratum and if $x\in[x_1,x_2]$ lies below it, then the local move results in a sphere-and-interval tangle whose Reidemeister diagram differs from the original by an R3~move.

\begin{equation}\label{E:PushMove}
\psfrag{r}[r]{\small\emph{R3}}
\centering
\includegraphics[width=0.8\textwidth]{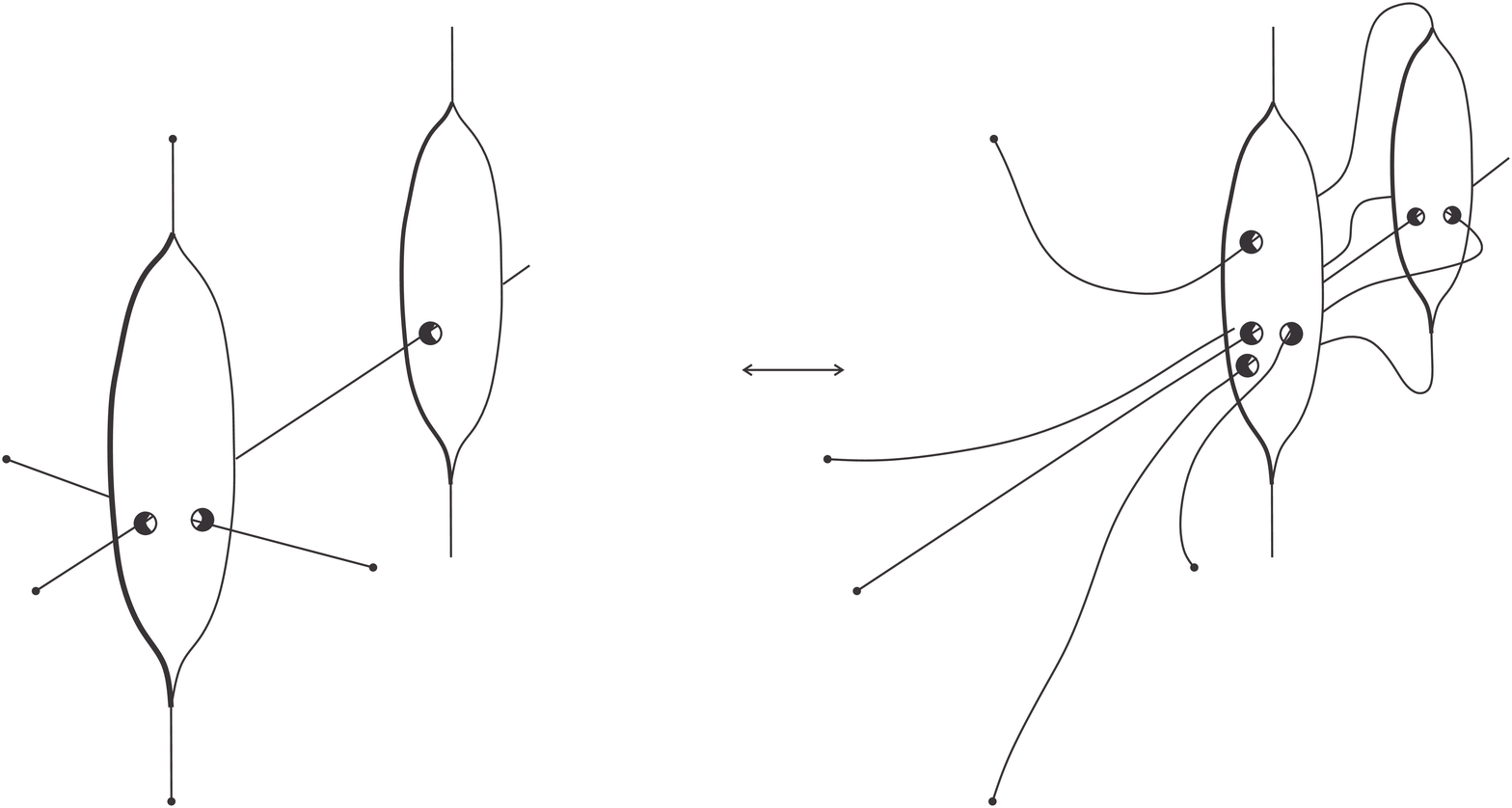}
\end{equation}

An R1~move parallels the corresponding move on Roseman diagrams of sphere and interval tangles. Finally, stabilization of Rosemeister diagrams corresponds to stabilization of Inca foams.
\end{description}
\end{proof}

\section{Invariants}\label{S:Invariants}

In this section we describe some simple characteristic quantities associated to equivalence classes of Inca foams. Such quantities are called \emph{invariants}. An invariant is called \emph{stable} if it is an invariant of stable equivalence classes.

\begin{remark}
Category theory allows a precise definition: Invariants are functors out of a category of Inca foams whose morphisms are equivalences, or out of a closely related category.
\end{remark}

\subsection{Underlying graph}\label{SS:UnderGraph}

Given a Gau{\ss} diagram $M=(G,S,\phi)$, the pair $(G,S)$ is an Inca foam invariant called the \emph{underlying graph}.

Define a vertex $r$ in $G$ to be a \emph{trivial agent} if $M$ is equivalent to a Gau{\ss} diagram $M^\prime$ in which $r$ is not an agent. The number of nontrivial agents is a stable invariant, which can be seen in a corresponding sphere and interval tangle $T$ by choosing a decomposing sphere intersecting the $T$ at $2$ points and containing no spheres in $T$ besides the sphere corresponding to $r$.

The graph $G^{\text{reduced}}$ obtained from $G$ by contracting all trivial agents is a stable Inca foam invariant.

\subsection{Underlying w-knotted object}\label{SSS:wknotsrel}

A \emph{w-tangle} is an algebraic object obtained as a concatenation of {\includegraphics[width=20pt]{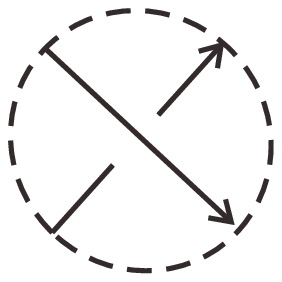}} and {\includegraphics[width=20pt]{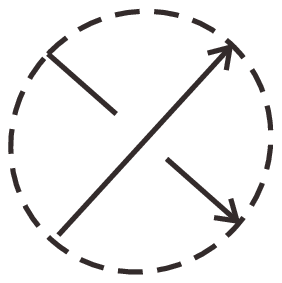}} in the plane. Two w-tangles are \emph{equivalent} if they are related by a finite sequence of Reidemeister moves as shown in Figure~\ref{F:WTangleReidemeister} \cite{FennRimanyiRourke:97,BarNatanDancso:13}.

The diagrammatic calculus of w-knots is similar to the diagrammatic calculus of Reidemeister diagrams, and indeed cutting up w-knots into \emph{w-knotted tangles} has been represented by a \emph{ball and hoop model} which is similar to our sphere-and-interval tangles, although different knotted objects in $4$--space are being described \cite{BarNatan:13}. 

There is no well-defined map from a w-tangle to a sphere-and-interval tangle or vice versa. However, the space of \emph{equivalence classes} of w-tangles is a quotient of the space of \emph{stable equivalence classes} of Reidemeister diagrams by the following \emph{false stabilization} move with no conditions imposed on $x$ and $y$ (if $x,y\in S$ then this move is not stabilization).

\begin{equation}\label{E:FalseStabilization}
\centering
\raisebox{-0.5\height}{\includegraphics[width=0.27\textwidth]{tikzeps5.eps}}
\end{equation}

The difference between equivalence classes of w-tangles and of Reidemeister diagrams of Inca foams lies in how we treat the over-strands. True and false stabilization combine to suppress over-strands, so that Reidemeister moves for Reidemeister diagrams coincide, in the quotient, with Reidemeister moves for w-tangles. We thus have the following:

\begin{proposition}\label{P:TMtoW}
The space of equivalence classes of w-tangles is isomorphic to the quotient of the space of stable equivalence classes of Inca foams by false stabilization.
\end{proposition}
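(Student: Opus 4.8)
The plan is to reduce everything to Reidemeister diagrams and then match relations one at a time. By Theorem~\ref{T:Equivalence} the space of stable equivalence classes of Inca foams is identified with the space of Reidemeister diagrams modulo the cosmetic moves of Figure~\ref{F:local_moves_machines}, the Reidemeister moves of Figure~\ref{F:local_moves_machines1}, and (de)stabilization. Adjoining false stabilization (Equation~\ref{E:FalseStabilization}) to this list, the proposition becomes the assertion that the resulting quotient is in bijection with equivalence classes of w-tangles. I would prove this by exhibiting a forgetful map on diagrams and checking that it descends to a bijection on classes.

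First I would define a map $\Phi$ from Reidemeister diagrams to w-tangle diagrams. A Reidemeister diagram is built from thin under-strands (intervals) and thick over-strands (the agents and spheres); given one, I read off a w-tangle by regarding each thin and each thick arc as an ordinary w-tangle strand and recording at every interaction only the binary over/under crossing datum (thick over thin), discarding the information that the over-strand is a distinguished geometric sphere. On the cosmetic moves $I1, I2, I3, VR1\text{--}3, SV$ of Figure~\ref{F:local_moves_machines}, $\Phi$ is either unchanged or altered by a planar or virtual w-tangle move, so $\Phi$ is compatible with the presentation data, and I would check that it descends to equivalence classes.

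Next, and this is the substance of the argument, I would verify move by move that $\Phi$ sends each Reidemeister move $R1, R2, R3, UC$ of Figure~\ref{F:local_moves_machines1} to a finite sequence of w-tangle Reidemeister moves (Figure~\ref{F:WTangleReidemeister}), and that conversely each w-tangle move lifts. The pivotal point is the treatment of over-strands: in a w-tangle the two strands at a crossing are on an equal footing, whereas in a Reidemeister diagram an over-strand is a rigid sphere that may skewer several under-strands at once, as in the ``kebab'' of Equation~\ref{E:kebab}. True stabilization can only create or delete an over-strand disjoint from the image of $\phi$, but false stabilization removes that restriction, so the two combined let me split, merge, insert, and delete over-strand structure freely even between genuine agents. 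I would then identify which combination of $UC$, the virtual moves, and false stabilization reproduces the over-strand (forbidden-move) relation characterizing w-tangles, with no relation left over on either side.

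With the move dictionary established, the bijection follows: surjectivity is clear because every w-tangle crossing is $\Phi$ of an interaction, so each w-tangle diagram is $\Phi$ of some Reidemeister diagram; well-definedness from the previous step gives that Reidemeister-, stabilization-, and false-stabilization-equivalent diagrams have w-equivalent images, so $\Phi$ descends to the quotient; and injectivity follows by running the dictionary in reverse, since two Reidemeister diagrams with $\Phi$-equivalent images differ only in over-strand data, which is precisely what false stabilization washes out. The main obstacle I expect is the third step: showing that adjoining false stabilization trivializes the over-strand data \emph{exactly}, neither over-collapsing (identifying genuinely distinct w-tangles) nor under-collapsing (retaining spurious sphere information). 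Pinning down that the relation generated by false stabilization together with the existing moves matches the w-tangle relations one-for-one is the delicate diagrammatic bookkeeping at the heart of the proof.
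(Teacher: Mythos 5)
Your proposal is correct and follows essentially the same route as the paper: the paper's own (very terse) justification of Proposition~\ref{P:TMtoW} is precisely that true and false stabilization combine to suppress the over-strand (sphere/agent) structure, so that in the quotient the Reidemeister moves for Reidemeister diagrams coincide with the Reidemeister moves for w-tangles. Your forgetful map $\Phi$, the reduction via Theorem~\ref{T:Equivalence}, and the move-by-move dictionary are the natural formalization of that argument, and the delicate bookkeeping you flag at the end (that false stabilization washes out exactly the sphere data, no more and no less) is exactly the point the paper asserts without further detail.
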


\begin{figure}[htb]
\psfrag{T}[r]{\small \emph{VR1}}
\psfrag{R}[r]{\small \emph{VR2}}
\psfrag{S}[r]{\small \emph{VR3}}
\psfrag{Q}[r]{\small \emph{SV}}
\psfrag{A}[r]{\small \emph{R2}}
\psfrag{B}[r]{\small \emph{R3}}
\psfrag{C}[r]{\small \emph{UC}}
\psfrag{D}[r]{\small \emph{R1}}
\centering
\includegraphics[width=0.9\textwidth]{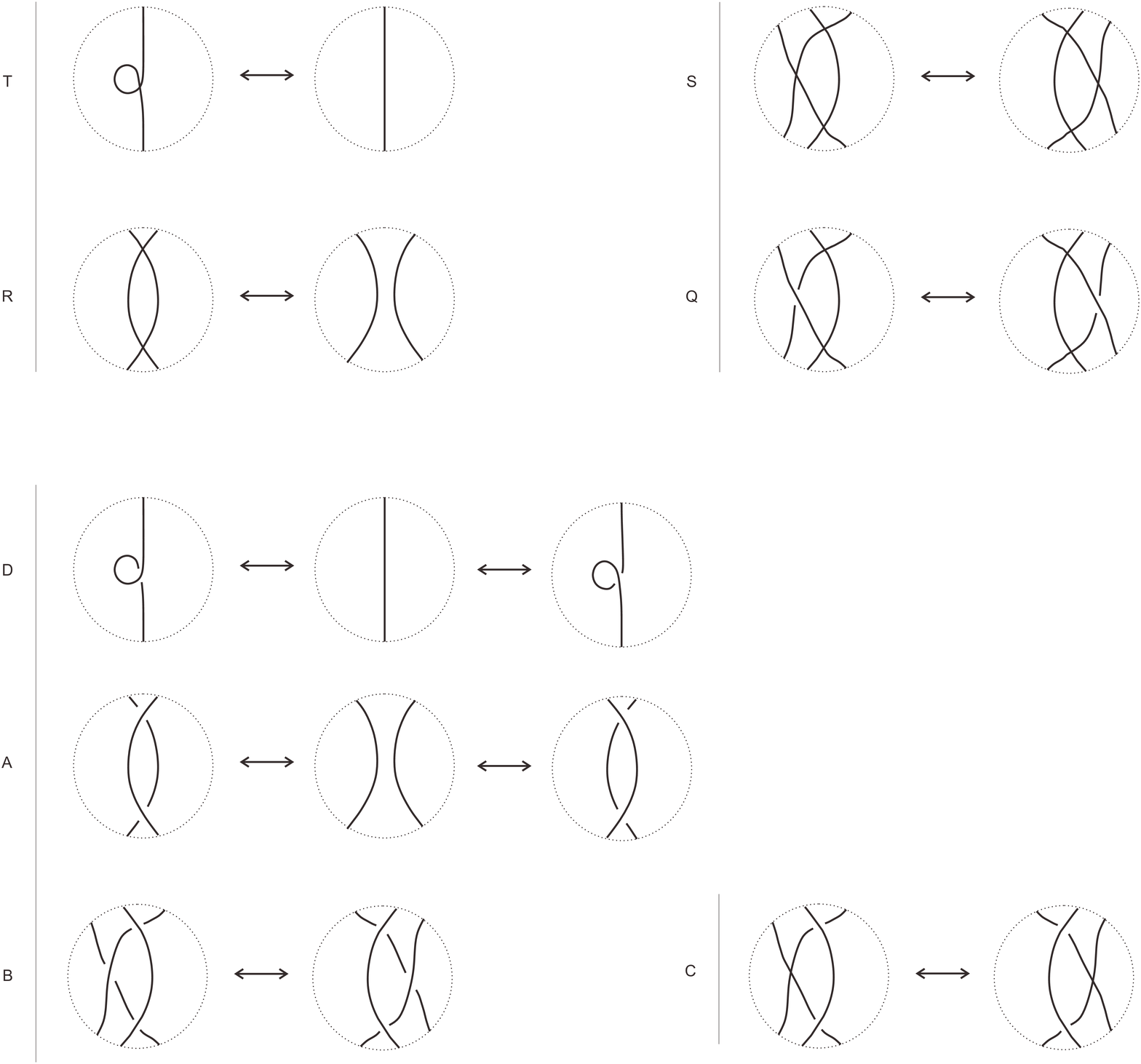}
\caption{\label{F:WTangleReidemeister}Reidemeister moves for w-tangles.}
\end{figure}

\begin{definition}\label{D:UnderlyingWKnot}
If a w-tangle $K$ corresponds to a stable equivalence class of Inca foams to which $M$ belongs, then we say that $K$ is the \emph{underlying w-tangle of $M$}.
\end{definition}

\begin{remark}
Satoh's Conjecture is that equivalence classes of w-knots are in bijective correspondence with a certain class of knotted tori in $\mathds{R}^4$ known as \emph{ribbon torus knots} \cite{Satoh:00}. If a connected Inca foam has $\infty$ outside it, then it is equal to such an embedded knotted torus with discs inside it, whose boundaries are meridians of the torus. The topological explanation of Proposition~\ref{P:TMtoW} is that false (de)stabilization is the operation of adding and taking away such discs (leaving at least one, so in particular Theorem~\ref{T:Equivalence} does imply Satoh's Conjecture).
\end{remark}

\begin{remark}
Like Inca foams, ribbon torus knots objects also admit Rosemeister diagrams. See Figure~\ref{F:WKnot}.
\end{remark}

\begin{figure}
\centering
\includegraphics[width=0.5\textwidth]{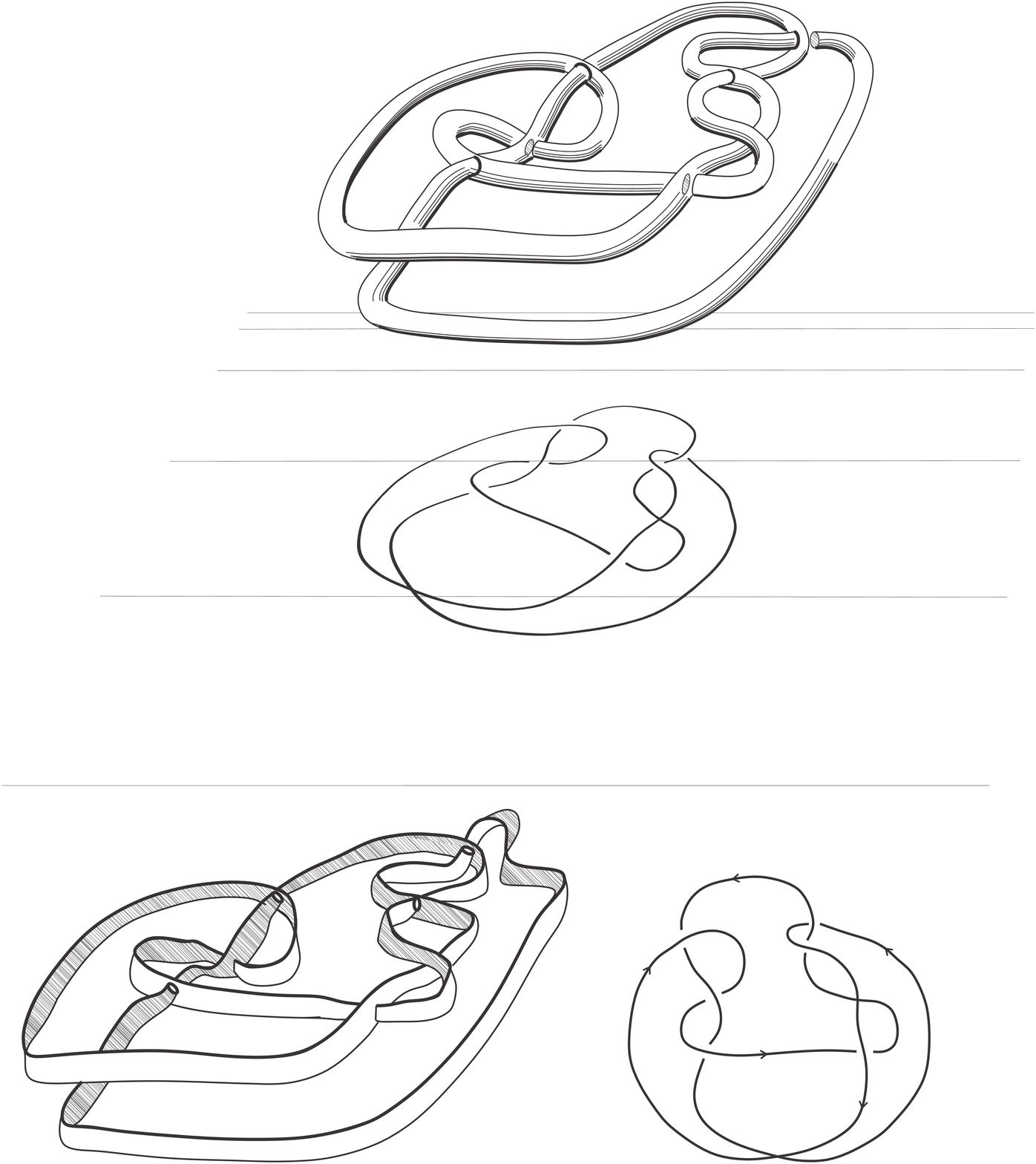}
\caption{\label{F:WKnot} A Reidemeister diagram, a Roseman diagram, and a Rosemeister diagram of a ribbon torus knot.}
\end{figure}

\subsection{Fundamental quandle}\label{SS:UniversalRack}

Define a \emph{quandle} to be a set $Q$ of \emph{colours} equipped with a set $B$ of binary operations subject to the following three axioms:

\subsubsection*{Idempotence:} $x\trr x=x$ for all $x\in Q$ and for all $\trr\in B$.
\subsubsection*{Reversibility:} The map $\trr y\colon\, Q\to Q$, which maps each colour $x\in Q$ to a corresponding colour $x\trr y\in Q$, is a bijection for all $(y,\trr)\in (Q,B)$. In particular, if $x\trr y = z\trr y$ for some $x,y,z\in Q$ and for some $\trr\in B$, then $x=z$.
\subsubsection*{Distributivity:} For all $x,y,z\in Q$ and for all $\trr,\brr \in B$:
\begin{equation}\label{E:Distributivity} (x\trr y)\brr z= (x\brr z)\trr (y\brr z)\enspace .\end{equation}

\begin{remark}
The usual definition of a quandle is the case that $B$ consists of only a single element $\trr$ and its inverse $\rrt$ (\textit{e.g.} \cite{Joyce:82}). Our notion of quandle follows Przytycki \cite{Przytycki:11} who named such a structure a \emph{multi-quandle}.
\end{remark}

A \emph{$(Q,B)$--colouring} $\rho$ of a Gau{\ss} diagram $(G,S,\phi)$ is an assign an element of $B$ to element of $S$ and an element of $Q$ to each vertex of $G$ (in particular, elements of $S$ are coloured both by an element of $B$ and by an element of $Q$). The element of $Q$ by which a vertex is coloured is called its \emph{colour}. The condition that must be satisfied is that for each edge $e\in \phi(v)$, if the colour of the tail of $e$ is $x$, the colour of $v$ is $y$, and the operation of $v$ is $\brr$, then the colour of the head of $e$ must be $x\brr y$. To simplify notation, we write the operation directly onto the Gau{\ss} diagram edge.

\begin{equation}\label{E:QInteract}
\centering
\raisebox{-0.5\height}{\includegraphics[width=0.18\textwidth]{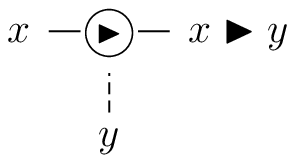}}
\end{equation}

Given a Gau{\ss} diagram $(G,S,\phi)$  colour the vertices of $G$ by distinct formal symbols $c_1,c_2,\ldots,c_N$ and colour the agents in $S$ by distinct elements of $B$, and impose the relations given by \eqref{E:QInteract}. This procedure gives rise to a universal structure $(Q(M),B(M))$ for a Gau{\ss} diagram $M=(G,S,\phi)$, which is a stable invariant. The pair $(Q(M),B(M))$ is called the \emph{fundamental quandle} of $M$.

\begin{remark}
If $B$ is a single-element set then $(Q(M),B(M))$ descends to an invariant of the underlying w-knotted object of $M$.
\end{remark}


\subsection{Linking graph}\label{SS:LinkingGraph}

Consider a Gau{\ss} diagram $M=(G,S,\phi)$, and let $P_1,P_2,\ldots,P_\nu$ denote the connected components of $G$. The \emph{linking number} of vertex $r$ with connected component $j$ is the number of edges $e$ in component $j$ such that $e\in \phi(r)$ and the direction of $e$ is $\rightarrow$, minus the number of edges $e$ in process $j$ such that $e\in \phi(r)$ with direction $\leftarrow$. The \emph{linking graph} $\mathrm{Link}(M)$ of $M$ is a labeling of each vertex in $G$ by a \emph{linking vector} $v^r\ass \left(v^r_{1},v^r_2\ldots,v^r_{\nu}\right)$ whose $k$th entry is the linking number of $r$ with component $k$. The \emph{unframed linking graph} $\mathrm{Link}_0(M)$ is the labeled graph obtained by setting to zero the entry in each linking vector $v^r$ which represents the interactions of $r$ with its own process $P$.

Both the linking graph and the unframed linking graph are Inca foam invariants. This is because an R2 move cancels or creates a pair of inverse interactions $\trr$ and $\rrt$ by the same agent, while an R3 move has no effect on any linking vector, and the effect of an R1 move is only on the `diagonal' entries.

The \emph{reduced linking graph} $\widetilde{\mathrm{Link}}(M)$ of a linking graph $\mathrm{Link}(M)$ is the labeled graph obtained from $\mathrm{Link}(M)$ by contracting all $2$--valent vertices with zero linking vector from the graph (contracting an edge incident to them). The  \emph{reduced unframed linking graph} $\widetilde{\mathrm{Link}_0}(M)$ is defined analogously. These reduced graphs are stable invariants.

\begin{example}
Consider the following Gau{\ss} diagram of which the $j$th vertex in the $i$th component is labeled $x_{ij}$.
\begin{equation}
\centering
\raisebox{-0.5\height}{\includegraphics[width=0.89\textwidth]{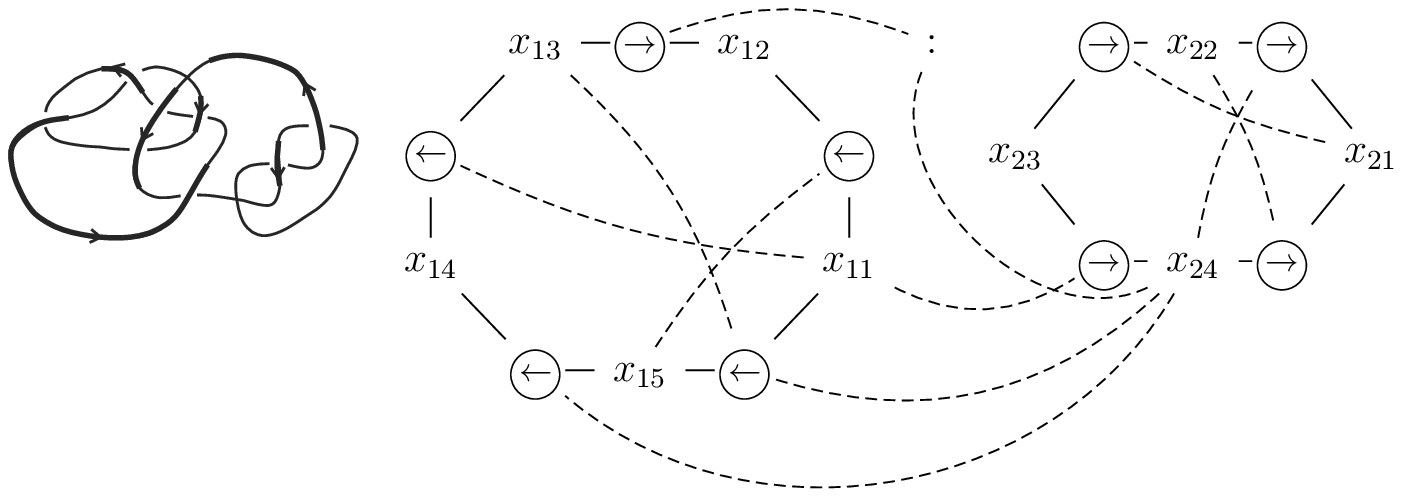}}
\end{equation}
For this Gau{\ss} diagram, the linking graph, $\mathrm{Link}(M)$, and its corresponding stabilization, $\mathrm{Link}_0(M)$, (depicted below using squiggly arrows)
are obtained as
\begin{equation}
\centering
\raisebox{-0.5\height}{\includegraphics[width=0.8\textwidth]{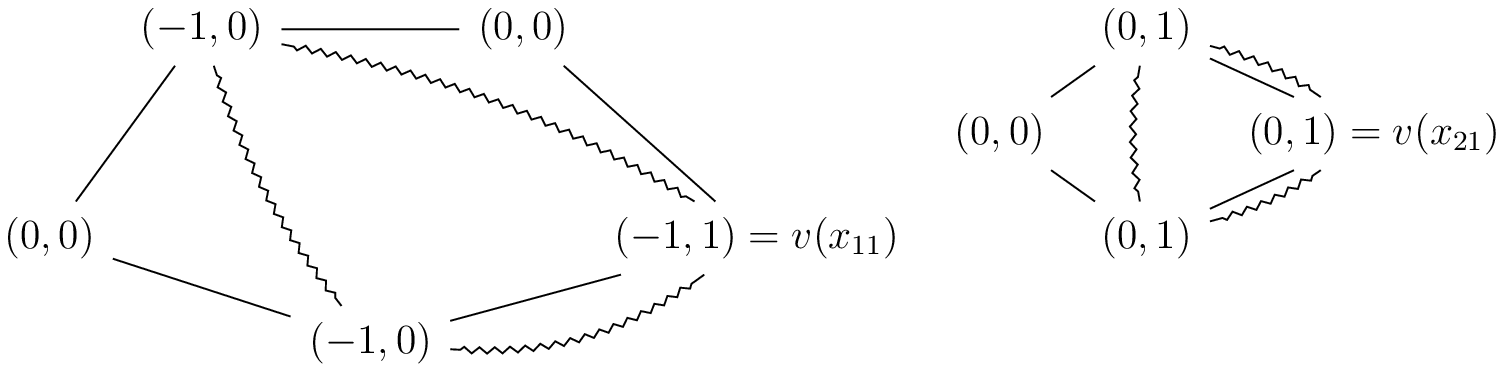}}
\end{equation}
Their unframed counterparts are
\begin{equation}
\centering
\raisebox{-0.5\height}{\includegraphics[width=0.82\textwidth]{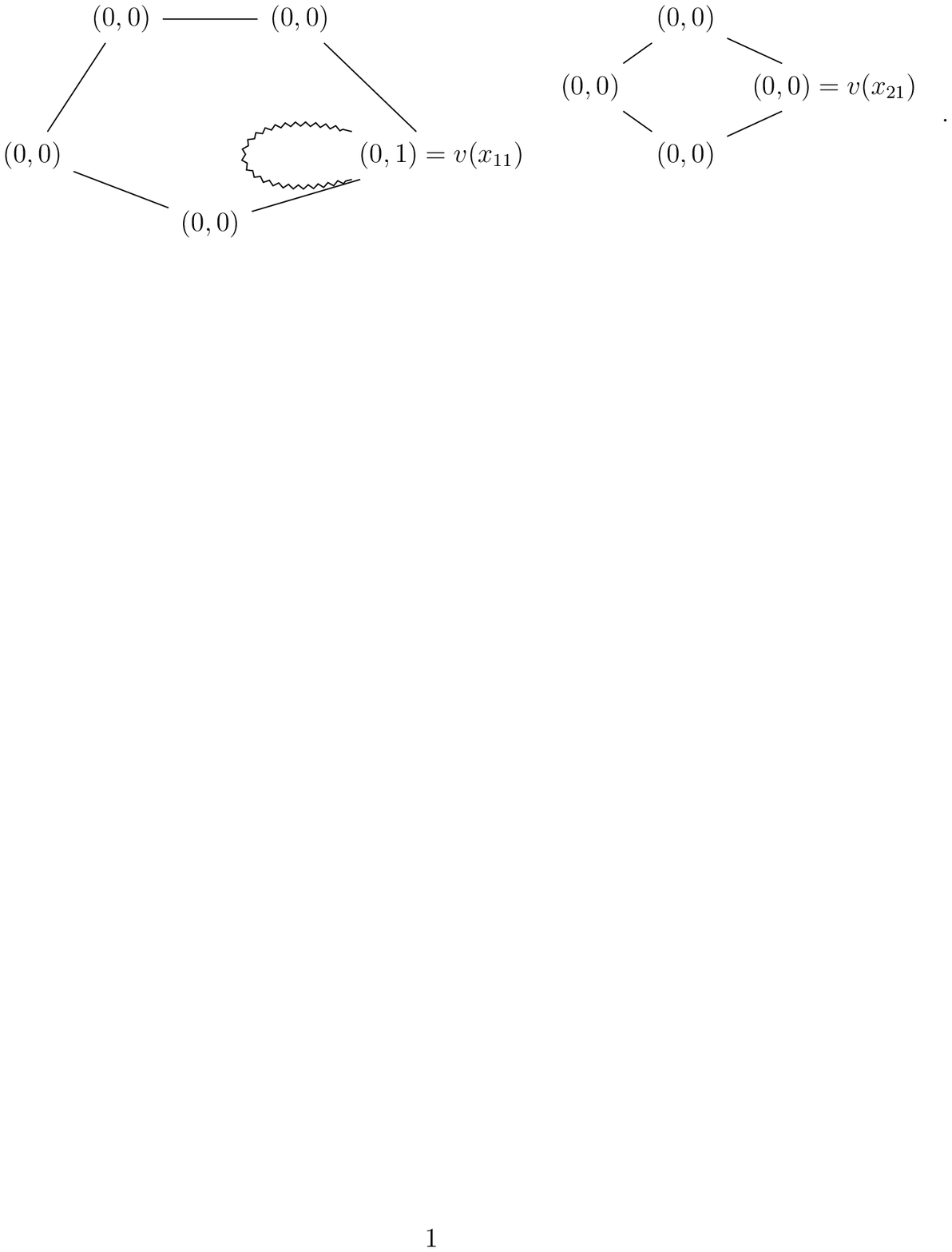}}
\end{equation}
\end{example}

\subsection{Shannon capacity}\label{SSS:ShannonCapacity}

The intuition behind the following invariant comes from viewing an Inca foam as an information carrier. More formally,
think of $M=(G,S,\phi)$ as a noisy communication channel through which colours in the fundamental quandle $(Q(M),B(M))$ as well as interactions are transmitted from (A)lice to (B)ob \cite{Shannon:56}. When $M$ is noisy and non-perfect, the messages on Bob's end appear corrupted and missing. A natural question can then be raised: What is the amount of non-confusable information that can be received by Bob?

Alice has a Gau{\ss} diagram $M$ with fundamental quandle $(Q(M),B(M))$. 
Alice sends Bob a Gau{\ss} diagram equivalent to $M$ and $k$ colours for $k$ vertices in $G$ (not necessarily distinct). For an interaction:
\begin{equation}
\centering
\raisebox{-0.5\height}{\includegraphics[width=0.15\textwidth]{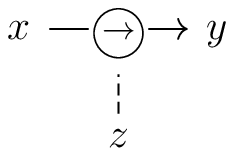}}
\end{equation}
\noindent we say that any \textbf{pair} of elements of the set $\{x,y,z\}$ can be \emph{confused}. Messages which cannot be confused are called \emph{distinct}. In general, if one message can uniquely be recovered from another by using the quandle axioms and an automorphism of $(Q(M),B(M))$, the two messages are said to be \emph{confused}. This same notion was called a \emph{tangle machine computation} in \cite{CarmiMoskovich:15b}.

Let $\textrm{Cap}_k(M)$ denote the number of distinct messages of length $k$ which $M$ admits.

\begin{definition}[Shannon capacity]
The \emph{Shannon capacity} of Gau{\ss} diagram $M$ is:
\begin{equation}
\mathrm{Cap}(M)\ass \sup_{k\in\mathds{N}}\sqrt[k]{\mathrm{Cap}_k(M)}
\end{equation}
\end{definition}

\begin{example}
Consider the Gau{\ss} diagram:
\begin{equation}
\centering
\raisebox{-0.5\height}{\includegraphics[width=0.35\textwidth]{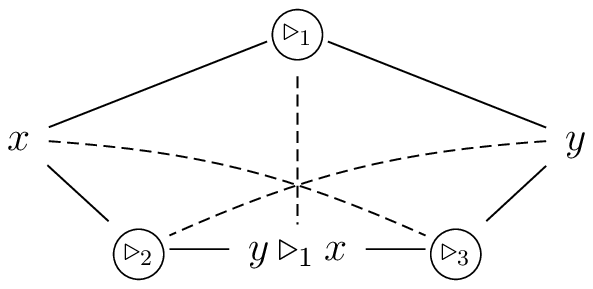}}
\end{equation}
Any two elements of $Q(M)$ are related by an automorphism of $(Q(M),B(M))$, therefore $\mathrm{Cap}_1(M)=1$. A maximal set of distinct messages of length $2$ is $\{xx, xy\}$ and so $\mathrm{Cap}_2(M)=2$. 
 It seems therefore as though $\mathrm{Cap}(M)=\sqrt{2}$.
\end{example}

The definition of the Shannon capacity of a Gau{\ss} diagram mimics that of the Shannon capacity of a graph \cite{Shannon:56}. It is a stable invariant because essentially it is an invariant of the fundamental quandle.

\begin{remark}
A generalization of the above definition would be for Alice to send Bob only partial information about $\phi$, and perhaps even no crossing information at all. 
\end{remark}

\section{Prime decomposition}\label{S:PrimeDecomposition}

To simplify notation and exposition, all Inca foams in this section are taken to be connected. But all constructions and proofs should generalize along the lines of \cite{Hashizume:58} for the multiple component case.

\subsection{The connect sum operation}
The definitions of this section are stated in terms of Gau{\ss} diagrams for convenience, but they apply equally to any of the other diagrammatic formalisms, and indeed to Inca foams, by Theorem~\ref{T:Equivalence}.

A Gau{ss} diagram $M\ass (G,S,\phi)$ is a \emph{connect sum} of $M_1\ass (G,S_1,\phi_1)$ and $M_2\ass (G,S_2,\phi_2)$ if $S_1\cap S_2=\emptyset$ and $\rho|_{S_{1,2}}=\rho_{1,2}$. In this case we write $M\ass M_1\Hash M_2$.

\begin{equation}
\centering
\raisebox{-0.5\height}{\includegraphics[width=0.95\textwidth]{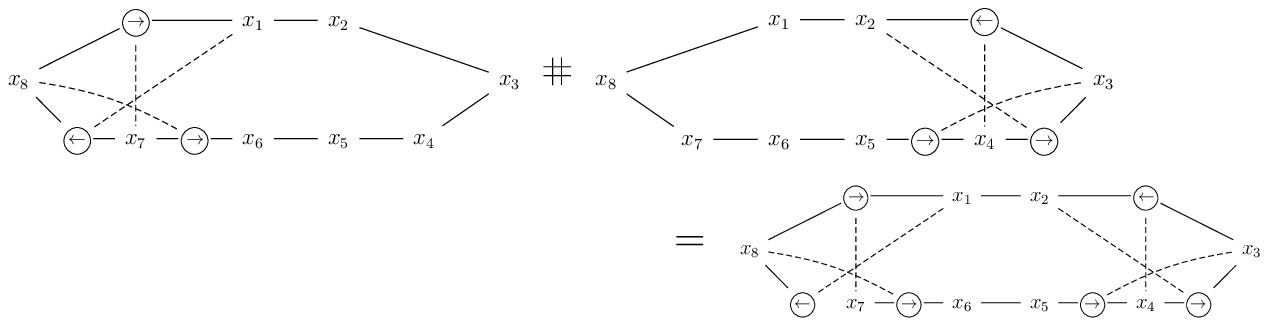}}
\end{equation}

The set of Gau{ss} diagrams with fixed underlying graph $G$ forms a commutative monoid under the connect sum operation. The identity element is the \emph{trivial Gau{ss} diagram} $(G,\emptyset,\phi_\emptyset)$, where $\phi_\emptyset$ denotes the empty function.

In the language of Inca foams, $T=T_1\Hash T_2$ signifies the existence of a \emph{Conway sphere} in $\mathds{R}^4$ which intersects $T$ only at disks at which its spheres meet, which admits a cross-section in a Rosemeister diagram of $T$ which intersects $T$ at two points with all interactions of $T_1$ in the inside and all interactions of $T_2$ on the outside, or vice versa. These spheres can be chosen to project to circles in a corresponding Reidemeister diagram. See Figure~\ref{F:tangle_4dnew}.

\begin{figure}[htb]
\begin{minipage}[t]{1\linewidth}
\centering
    \includegraphics[width=0.7\textwidth]{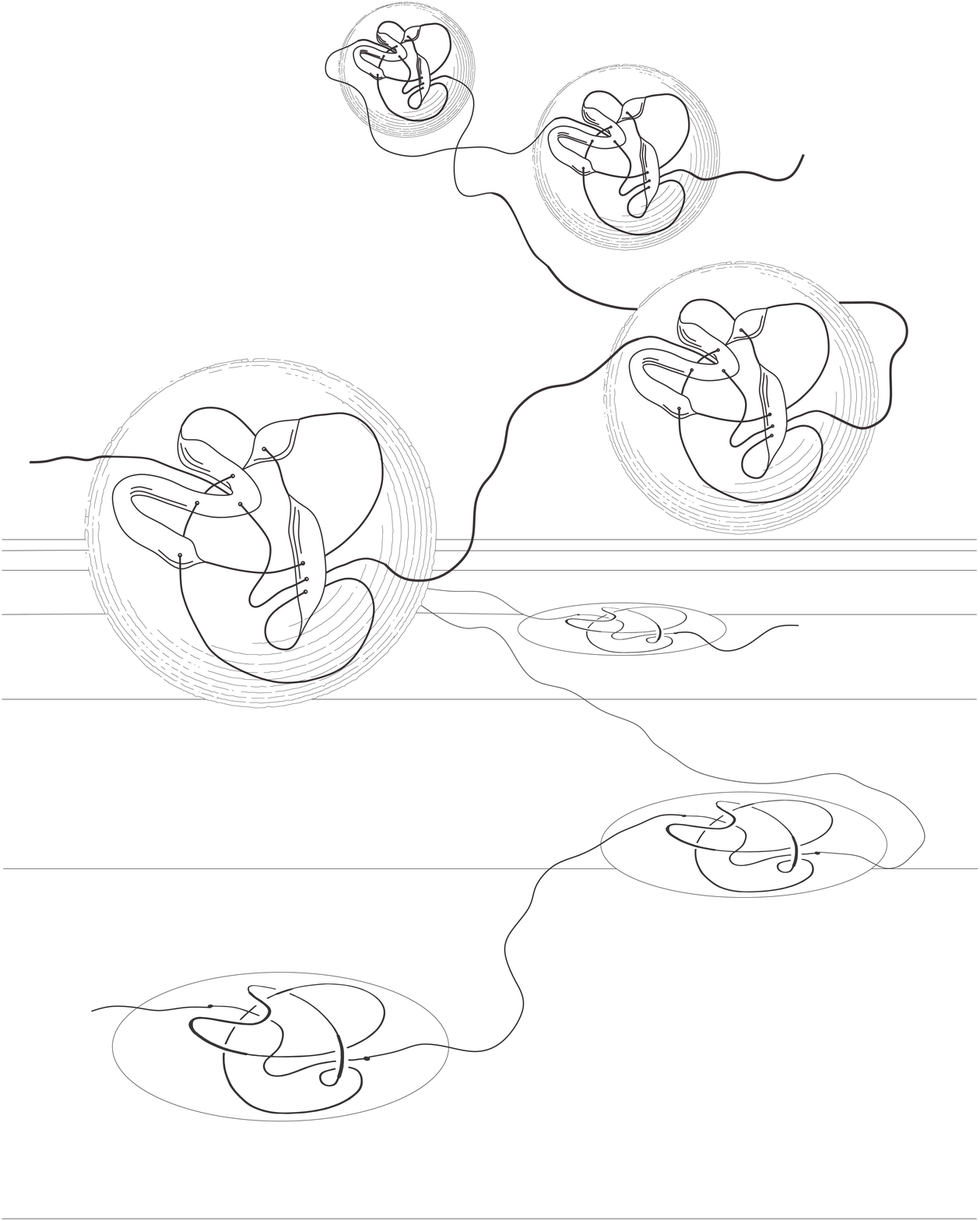}
\end{minipage}
\caption{\label{F:tangle_4dnew}The Roseman diagram for the sphere-and-interval tangle above is represented by a Reidemeister diagram appearing as the `shadow' on the plane underneath. A collection of decomposing spheres projects to a collection of decomposing circles on the Reidemeister diagram below.}
\end{figure}

A nontrivial Gau{\ss} diagram $M$ is \emph{prime} if for any decomposition $M=M_1\Hash M_2$ either $M_1$ or $M_2$ is trivial. By convention, trivial Gau{\ss} diagrams are not prime.

\subsection{Invariant: Prime decomposition}

\begin{theorem}
Prime decomposition is an invariant of a connected nontrivial Inca foam up to permutation of prime factors.
\end{theorem}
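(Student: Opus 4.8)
The plan is to follow the classical Schubert--Hashizume strategy for unique prime decomposition of knots and links \cite{Hashizume:58}, transported to our setting through the geometric dictionary of Theorem~\ref{T:Equivalence}. The first step is to reinterpret the connect sum in terms of \emph{decomposing spheres}: a decomposition $M = M_1 \Hash M_2$ is realized by a Conway sphere in $\mathds{R}^4$ meeting the foam only in the disks where its spheres meet and separating all interactions of $M_1$ from those of $M_2$, and iterating, a factorization into primes is encoded by a system of pairwise disjoint decomposing spheres whose complementary regions each carry a single prime factor. As in Figure~\ref{F:tangle_4dnew}, such a system projects to a system of \emph{decomposing circles} in a Reidemeister diagram, each meeting the diagram transversally in exactly two points. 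Existence of some prime factorization is then immediate: fixing a Gau{\ss} diagram representative $(G,S,\phi)$, every decomposition partitions the finite agent set $S$, so repeatedly splitting off a nontrivial factor terminates, and in the extreme case one reaches the factorization into single-agent diagrams, each of which is prime when nontrivial. The entire force of the theorem is therefore \emph{uniqueness up to permutation} together with \emph{independence of the chosen representative}.

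For uniqueness I would argue on the planar Reidemeister shadow, where the tools are most tractable. Suppose $M$ admits two prime factorizations, realized by circle systems $\mathcal{A}$ and $\mathcal{B}$. After a diagram isotopy put them in general position, so that $\bigcup \mathcal{A}$ meets $\bigcup \mathcal{B}$ transversally in finitely many points, and minimize this number over the equivalence class. The key reduction is the familiar innermost-arc/bigon argument: an innermost subarc of some $\Sigma' \in \mathcal{B}$ cut off by $\mathcal{A}$ bounds, together with an arc of some $\Sigma_i \in \mathcal{A}$, a disk meeting neither the rest of $\mathcal{A} \cup \mathcal{B}$ nor any interaction, and one slides $\Sigma_i$ across it to remove at least two intersection points. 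Because the spheres of an Inca foam bound balls and the foam has \emph{no local knotting}, each such slide is realized by the cosmetic and Reidemeister moves of Figures~\ref{F:local_moves_machines} and~\ref{F:local_moves_machines1} and produces a decomposing circle separating exactly the same interactions. Minimality then forces $\mathcal{A}$ and $\mathcal{B}$ to be disjoint.

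With the two systems disjoint, a combinatorial endgame matches the factors. Each prime region $R_i$ cut out by $\mathcal{A}$ is disjoint from every $\Sigma' \in \mathcal{B}$, so each $\Sigma'$ lies in a single $\mathcal{A}$-region and each $R_i$ lies in a single $\mathcal{B}$-region. If some $\Sigma'$ interior to an $R_i$ separated the interactions inside $R_i$, it would exhibit a nontrivial decomposition of the prime factor carried by $R_i$, which is impossible; hence every such $\Sigma'$ is trivial and may be discarded. Pruning trivial circles from both systems yields a configuration in which $\mathcal{A}$ and $\mathcal{B}$ coincide, giving a bijection of prime factors respecting equivalence classes. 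Invariance under the local moves is subsumed here: each move is an ambient isotopy or a (de)stabilization, so a move carries one decomposing system to another, and the stable invariants of Section~\ref{SS:UnderGraph} (the number of nontrivial agents and the reduced graph) detect trivial factors consistently, so the multiset of prime classes cannot change.

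I expect the main obstacle to be controlling how the thick-line (sphere) strands and the interactions cross a decomposing circle, namely proving that a minimal-position decomposing circle really meets the diagram in exactly two points and encloses a well-defined, move-invariant set of interactions, so that the planar innermost reduction is genuinely realized by the allowed local moves and not merely up to unrecorded isotopy. Intertwined with this is the passage between the planar circles and the rigorous $4$-dimensional Conway spheres: one must check that a reduction performed on the shadow lifts to an ambient isotopy of the decomposing spheres in $\mathds{R}^4$, which is precisely where the hypotheses that the spheres bound balls and that there is no local knotting are indispensable, since without them a surgered sphere could fail to bound or could link the foam and the reduction would break down.
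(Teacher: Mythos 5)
Your strategy is genuinely different from the paper's, and it contains a gap at its crucial step. The paper never tries to compare two arbitrary prime factorizations directly: it proves a local confluence statement (any two sub-factorizations of a common factorization admit a common sub-factorization, obtained by bisecting decomposing spheres along hyperplanes and discarding the trivial factors this creates) and then invokes the Diamond Lemma together with finiteness and invariance of the reduced underlying graph. You instead attempt the classical Schubert-style argument: realize both prime factorizations by circle systems $\mathcal{A}$ and $\mathcal{B}$ on one Reidemeister diagram, minimize the number of intersection points, and remove intersections by innermost-bigon slides. The critical step in your argument is the claim that an innermost subarc of $\Sigma'\in\mathcal{B}$ cuts off, together with an arc of $\Sigma_i\in\mathcal{A}$, a disk ``meeting neither the rest of $\mathcal{A}\cup\mathcal{B}$ nor any interaction.'' This is unjustified and in general false: innermostness only guarantees that the bigon contains no further arcs of the circle systems; strands and interactions of the diagram can cross such a bigon (a strand may enter through the arc of $\Sigma_i$ and exit through the arc of $\Sigma'$). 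Indeed, if innermost bigons were automatically empty, minimality would immediately give disjointness --- the entire difficulty of the Schubert--Hashizume argument is that in minimal position every bigon \emph{does} contain part of the tangle. Sliding $\Sigma_i$ across a non-empty bigon changes the number of points at which $\Sigma_i$ meets the diagram, destroying the defining property that a decomposing circle meets the diagram in exactly two points. The classical remedy (Hashizume, Burde--Zieschang) is not an isotopy but \emph{surgery} on a decomposing sphere along an innermost disk, followed by a case analysis on how many of the two tangle points lie on each resulting piece, showing that one surgered piece is again a decomposing sphere inducing the same splitting; your sketch contains none of this, and your closing paragraph concedes the point without resolving it.

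There is a second, smaller gap in your endgame: even granting disjointness, ``pruning trivial circles yields a configuration in which $\mathcal{A}$ and $\mathcal{B}$ coincide'' is too strong. Disjoint systems realizing equivalent prime factorizations need not coincide as planar configurations (nested versus side-by-side circles, for instance); what one can conclude is that the complementary regions pair up so that corresponding factors agree up to unit factors, and this matching requires the monoid facts established in the paper's Lemma~\ref{P:ConnSumWD} --- that a connect sum of trivial diagrams is trivial and that a connect sum with a nontrivial factor is never trivial --- which you use implicitly but never prove. The comparison also shows what the paper's route buys: confluence only requires comparing two single bisections of the same factorization, where each bisecting hyperplane meets the tangle in at most two points and the case analysis is short, whereas making two arbitrary prime systems disjoint is essentially equivalent in strength to the uniqueness statement you are trying to prove.
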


\begin{lemma}\label{P:ConnSumWD}
Equivalence classes of connected Inca foams form a commutative monoid under connect-sum.
\end{lemma}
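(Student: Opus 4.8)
The plan is to work throughout with the diagrammatic formalisms of Section~\ref{S:FiveDescriptions}, which is legitimate by Theorem~\ref{T:Equivalence}; I would use the Conway-sphere description of $\Hash$ alongside Rosemeister diagrams, since these retain enough topology for isotopy arguments while still supporting the local moves. Recall that for a \emph{connected} foam the underlying graph is a single path or cycle, so a connect sum $T_1\Hash T_2$ is realized by a Conway sphere $\Sigma$ meeting the foam transversely in two points, with $T_1$ lying in the ball bounded by $\Sigma$ and $T_2$ in its complement. There are four things to establish: that $\Hash$ is well defined on equivalence classes, that it has an identity, that it is associative, and that it is commutative.

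Closure and the unit axiom I expect to be routine. The object obtained by gluing two connected foams through $\Sigma$ is again an immersion of the required type satisfying Definition~\ref{D:Necklace}, the sphere/ball conditions being local and hence preserved, and it is connected because the two process arcs are joined through the two transverse points. At the level of Gau{\ss} diagrams the agent sets simply combine by disjoint union, so the trivial connected foam $\mathbf 1$ carrying no agents (empty agent set) is a two-sided identity: $T\Hash\mathbf 1\sim T$ because connect-summing a factor with no interactions changes only the presentation, its process arc being absorbed into that of $T$. Associativity is equally direct, since both $(T_1\Hash T_2)\Hash T_3$ and $T_1\Hash(T_2\Hash T_3)$ are presented by a single foam carrying three summands in sequence separated by two disjoint, unnested Conway spheres, and the two bracketings differ only by an ambient isotopy rearranging these spheres.

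The genuine content is then well-definedness together with commutativity, both of which I would derive from a single sliding argument of the classical connected-sum type (as in \cite{Hashizume:58}). For well-definedness two points must be checked. First, $T_1\Hash T_2$ a priori depends on where on each summand the connecting band is attached (and, when the process line is oriented, on matching that orientation head-to-tail); independence of the attaching site follows by sliding the connecting region along the connected process structure of each summand, connectedness being exactly what makes any two sites isotopic. Second, if $T_i$ is replaced by an equivalent $T_i'$, the relating sequence of local moves can be performed inside the ball bounded by $\Sigma$, resp. its complement, away from the two transverse points, and therefore lifts to moves on $T_1\Hash T_2$, giving $T_1\Hash T_2\sim T_1'\Hash T_2'$.

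The main obstacle I anticipate is commutativity, $T_1\Hash T_2\sim T_2\Hash T_1$. The idea is to shrink the summand $T_2$ into a small ball and slide that ball along the connected process line of $T_1$, past every agent-sphere of $T_1$, until it emerges on the opposite side, thereby interchanging the factors. The work is to verify that each elementary stage of this slide---passing the small ball through an agent or a crossing---is realized by the permitted moves, principally the $R2$, $R3$, and virtual/cosmetic moves of Figures~\ref{F:local_moves_machines} and \ref{F:local_moves_machines1}, and that the procedure terminates, since only finitely many interactions are encountered. It is here that both connectedness of the foam and the extra room afforded by $\mathds{R}^4$ are used essentially; read at an intermediate stage, the very same slide also supplies the independence-of-site claim invoked above, so a single geometric argument settles both the hardest parts of the lemma.
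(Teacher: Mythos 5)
Your proposal is correct and follows essentially the same route as the paper's proof: pass to Rosemeister diagrams and Conway spheres, take the agentless diagram as the identity, and obtain commutativity (and independence of attaching site) by shrinking one summand into a small ball and sliding it through the other along the connected process line. The only refinement worth noting is that the paper performs this slide directly as an ambient isotopy in $\mathds{R}^4$, with Theorem~\ref{T:Equivalence} then supplying the corresponding legal diagram moves, so the move-by-move verification you flag as the main obstacle is unnecessary; the paper also records the extra fact (not needed for the monoid axioms, but used for prime factorization) that connect-summing a nontrivial foam with anything remains nontrivial.
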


\begin{proof}
Translating into Rosemeister diagrams, let $T= T_1\Hash T_2$ and $S$ be a Conway sphere in $H\simeq \mathds{R}^3$ which induces a given decomposition of an Inca foam in $S^4$. We first show that the connect-sum of trivial diagrams is trivial. If $T_1$ is trivial, then $T_2$ can be shrunk into a tiny ball and $T_1$ can be trivialized inside $T$, so that we see that $T$ is trivial if and only if $T_2$ is trivial. We now show that the connect-sum of a non-trivial diagram with anything else is non-trivial. If $T_1$ is non-trivial, then it has an interaction which cannot be trivialized by Reidemeister moves. This interaction corresponds to a sphere in the sphere-and-interval tangle which does not bound a ball in $\mathds{R}^4$ which is disjoint from the rest of $T$. Connect-summing with $T_2$ happens locally inside a small ball, so it cannot create such a `trivializing ball' in $\mathds{R}^4$--- the Conway sphere $S$ can be chosen to be disjoint from such a trivializing ball, so if it exists in $T$ then it also existed in $T_1$.

Commutativity is proven in the same way- shrink $T_2$ into a small ball, and move it all the way through $T_1$ by ambient isotopy.
\end{proof}

A \emph{prime factorization} of a nontrivial Gau{\ss} diagram $M$ is an expression $M=M_1\Hash M_2\Hash \cdots \Hash M_k$ where $M_1,\ldots M_k$ are prime.


\begin{theorem}[Unique prime factorization]\label{T:UniquePrimeFactorization}
Every Gau{\ss} diagram $M$ has a prime factorization $\mathcal{N}\ass N_1\Hash N_2\Hash\cdots \Hash N_k=M$, which is unique in the following sense: For any prime factorization $\mathcal{N}^\prime\ass N^\prime_1\Hash N^\prime_2\Hash\cdots\Hash N^\prime_k=M$ of $M$ that is topologically equivalent to $\mathcal{N}$, then there exists a permutation $\sigma$ on $k$ elements, and a set $\set{T_1,T_2,\ldots,T_k}$ of unit factors, such that $N_i= N_{\sigma(i)}^\prime\Hash\, T_i$ for all $i=1,2,\ldots,k$.
\end{theorem}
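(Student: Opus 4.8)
The plan is to follow the classical template for unique prime decomposition of knots and links, adapting the sphere arguments of Schubert and Hashizume (\cite{Hashizume:58}) to systems of decomposing (Conway) spheres for Inca foams. Existence of a factorization is a routine induction and the real content is uniqueness, which I would extract from a ``common refinement'' argument for two sphere systems together with the primeness of the factors. For existence I would induct on the number $n(M)$ of nontrivial agents of $M$, which is a stable invariant by Section~\ref{SS:UnderGraph} and is additive under $\Hash$ because the agent sets of the factors are disjoint and each nontrivial factor contributes at least one nontrivial agent. If $M$ is prime we are done; otherwise write $M = M_1 \Hash M_2$ with both factors nontrivial. By Lemma~\ref{P:ConnSumWD} each $M_i$ is again a connected Inca foam with $n(M_i) < n(M)$, so each factors into primes by induction, and concatenating the two factorizations yields one for $M$. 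Since $n$ is a nonnegative integer the process terminates.

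For uniqueness I would realize the two prime factorizations $\mathcal N$ and $\mathcal N'$ geometrically. Each $\Hash$ corresponds to a decomposing sphere, so $\mathcal N = N_1 \Hash \cdots \Hash N_k$ is encoded by a system $\Sigma = \{\Sigma_1, \ldots, \Sigma_{k-1}\}$ of disjointly embedded decomposing spheres — concretely, the cross-sectional $2$--spheres in the $3$--dimensional Rosemeister diagram, each meeting the sphere-and-interval tangle transversely in two points and bounding the successive factors — and likewise $\mathcal N'$ is encoded by a system $\Sigma'$. After a small ambient isotopy I would put $\Sigma$ and $\Sigma'$ in general position, so that $\Sigma \cap \Sigma'$ is a disjoint union of circles lying away from the (finitely many) tangle points. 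Taking an innermost such circle on a sphere of $\Sigma'$, it bounds a disk $D' \subset \Sigma'$ whose interior is disjoint from $\Sigma$; surgering the corresponding sphere of $\Sigma$ along $D'$ either strictly reduces the number of intersection circles or splits off a sphere bounding a ball disjoint from the tangle, which is a trivial, hence unit, factor. Iterating, I would make $\Sigma$ and $\Sigma'$ disjoint while changing each decomposition only by unit factors.

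Once $\Sigma$ and $\Sigma'$ are disjoint the combined system $\Sigma \cup \Sigma'$ refines both factorizations. Each sphere $\Sigma_j'$ now lies in a single complementary region of $\Sigma$, that is, inside some prime factor $N_i$; since $N_i$ is prime it cannot be cut nontrivially, so $\Sigma_j'$ must be parallel to a component of $\Sigma$ or cobound with it a region of $\mathds{R}^4$ meeting the tangle in a trivial piece. This pairing of the spheres of $\Sigma$ with those of $\Sigma'$ defines the permutation $\sigma$ (and in particular forces the two factorizations to have the same length $k$), and it exhibits each $N_i$ as $N_{\sigma(i)}' \Hash T_i$ for a unit factor $T_i$ absorbing the intervening parallel region. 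This is exactly the asserted form.

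The main obstacle will be the disjointification step in ambient dimension four: I must ensure that the innermost-circle surgeries are realized by \emph{ambient} isotopies of the foam rather than merely abstract surgeries of the spheres, and that the special structure of Inca foams — ribbon, locally unknotted spheres, disjoint gluing disks — guarantees that every sphere bounding a tangle-free region is genuinely a unit factor. Carefully tracking the trivial regions produced at each surgery, and verifying that they assemble into the factors $T_i$ rather than perturbing the prime factors themselves, is where the bookkeeping is most delicate; this is also the point at which Hashizume's refinements (\cite{Hashizume:58}) would be needed to remove the connectedness hypothesis for the multiple-component case.
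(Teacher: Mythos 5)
Your existence argument is essentially the paper's (the paper gets termination from finiteness and invariance of the reduced underlying graph $G^{\text{reduced}}$ where you use the count of nontrivial agents; note you still owe an argument that this count is \emph{super}additive under $\Hash$ --- subadditivity is obvious, but the induction needs $n(M)\geq n(M_1)+n(M_2)$). Your uniqueness argument, however, takes a genuinely different route from the paper, and it diverges exactly where it has a genuine gap. The paper never places two unrelated sphere systems in general position: it applies the Diamond Lemma to the refinement order on factorizations, and its key lemma compares only two \emph{sub-factorizations of one common factorization} $\mathcal{N}$, each produced by bisecting a decomposing sphere along a $3$--dimensional hyperplane. Two such hyperplanes $L^\prime$ and $L^{\prime\prime}$ in general position have controlled intersection, each meets the tangle in at most two points, and cutting along both yields the common refinement directly --- no innermost-circle machinery is required. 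Local confluence plus termination then forces uniqueness of the normal form, i.e.\ of the prime factorization.

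Your disjointification step, by contrast, is the heart of your uniqueness proof and is left as an acknowledged ``main obstacle'' rather than proved, and it is not routine. The paper's Conway spheres are $3$--spheres in $S^4$ (they bound $4$--balls), so two systems in general position intersect in closed \emph{surfaces}, not circles; the innermost-circle induction driving Hashizume's proof in $S^3$ does not apply as stated in ambient dimension four. Your retreat to $2$--sphere cross-sections in a Rosemeister diagram does restore circles, but it creates obligations you do not discharge: (i) the two prime factorizations must be realized by cross-sectional spheres in one and the same Rosemeister diagram, whereas a priori they live in diagrams related by a sequence of moves, through which sphere systems must be transported; (ii) a $2$--sphere obtained by surgery inside the $3$--dimensional diagram must be shown to lift back to a genuine Conway $3$--sphere in $\mathds{R}^4$ inducing a connect-sum decomposition of the foam; (iii) the innermost disk $D^\prime$ may contain one or both of the tangle points of its sphere, so a case analysis is needed before any surgery ``strictly reduces'' intersections; and (iv) the regions split off must be shown to be unit factors of the \emph{Gau{\ss}-diagram} monoid, not merely tangle-free-looking regions of the diagram. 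So while your Schubert--Hashizume template could plausibly be completed for this special class of objects, as written the central step is missing --- and the paper's Diamond Lemma architecture is designed precisely so that this step never has to be carried out.
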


Theorem~\ref{T:UniquePrimeFactorization} follows from the Diamond Lemma, whose hypotheses are satisfied by the following lemma together with finiteness and invariance of the reduced underlying graph $G^{\text{reduced}}$ (Section~\ref{SS:UnderGraph}) which guarantees that the number of prime factors of a Gau{\ss} diagram is finite.

To state the next lemma, define \emph{sub-factorization} $\mathcal{N}^\prime$ of a factorization $\mathcal{N}\ass N_1\Hash N_2\Hash\cdots\Hash N_k$ of Gau{\ss} diagram $M$ to be a factorization of $M$  obtained from $\mathcal{N}$ by factorizing one of its factors $N_i^\prime\Hash N_i^{\prime\prime} = N_i\in \mathcal{N}$.

\begin{lemma}
Any two sub-factorizations $\mathcal{N}^\prime$ and $\mathcal{N}^{\prime\prime}$ of the same factorization $\mathcal{N}$ share a common sub-factorization $\mathcal{N}^{\prime\prime\prime}$.
\end{lemma}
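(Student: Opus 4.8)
The plan is to read the statement through Theorem~\ref{T:Equivalence}, working in the Rosemeister/Reidemeister picture where a factorization is witnessed geometrically, and then to reduce to a two‑line case analysis. Recall that a factorization $\mathcal{N}\ass N_1\Hash\cdots\Hash N_k$ of $M$ is cut out by a system $\Sigma$ of pairwise disjoint decomposing (Conway) spheres, which partition a Rosemeister diagram of $M$ into disjoint regions $R_1,\ldots,R_k$ carrying the interactions of $N_1,\ldots,N_k$; subdividing a single factor $N_i$ as $N_i'\Hash N_i''$ amounts to adjoining one further Conway sphere $\sigma$ lying inside the region $R_i$. Thus a sub-factorization is exactly ``$\Sigma$ together with one new sphere in one region.'' Writing $\mathcal{N}'=\Sigma\cup\{\sigma'\}$ with $\sigma'\subset R_i$ and $\mathcal{N}''=\Sigma\cup\{\sigma''\}$ with $\sigma''\subset R_j$, the goal is to produce $\mathcal{N}'''$ reachable from each of $\mathcal{N}'$, $\mathcal{N}''$ by adjoining one additional sphere. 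I would split on whether $i=j$.

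First I would dispose of the case $i\neq j$. Here $\sigma'$ and $\sigma''$ sit inside the disjoint regions $R_i$ and $R_j$, so they are automatically disjoint from each other and from $\Sigma$. The system $\Sigma\cup\{\sigma',\sigma''\}$ subdivides $N_i$ and $N_j$ simultaneously; it is obtained from $\mathcal{N}'$ by adjoining $\sigma''$ (which still lies inside the intact factor region $R_j$ of $\mathcal{N}'$), and from $\mathcal{N}''$ by adjoining $\sigma'$. Each of these is a single subdivision of one factor, so $\Sigma\cup\{\sigma',\sigma''\}$ is a sub-factorization of both, and I take it to be $\mathcal{N}'''$.

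The substantive case is $i=j$, where $\sigma'$ and $\sigma''$ both lie inside the single region $R_i$, and here is where I expect the main obstacle. As drawn the two spheres need not be disjoint, and the whole construction hinges on first isotoping them to be disjoint inside $R_i$ without altering the partitions of interactions they induce. I would accomplish this by a standard innermost argument: place $\sigma'$ and $\sigma''$ in general position so that their intersection is a disjoint union of circles, take an innermost component bounding a disk meeting the other sphere only in its boundary, and surger across it, strictly reducing the number of intersection components. That such surgeries are available uses that the spheres involved bound balls whose interiors miss the foam (the foam's spheres do so by Definition~\ref{D:Necklace}, and the decomposing spheres are unknotted cross-sectional spheres), i.e.\ an irreducibility input; iterating empties the intersection. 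This disjointification is the exact analogue of Schubert's lemma in the classical proof of unique prime decomposition of knots, and it is the place where the ``no local knotting'' simplicity of Inca foams is doing the work.

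Once $\sigma'$ and $\sigma''$ are disjoint inside $R_i$ the argument finishes cleanly. If they induce the same partition of the interactions of $N_i$ then $\mathcal{N}'=\mathcal{N}''$ and there is nothing to prove. Otherwise, being disjoint, $\sigma''$ lies in the closure of exactly one of the two sub-regions into which $\sigma'$ cuts $R_i$ (they are either nested or side-by-side), so $\Sigma\cup\{\sigma',\sigma''\}$ splits $R_i$ into three nonempty parts; adjoining $\sigma''$ to $\mathcal{N}'$ subdivides precisely the one factor of $\mathcal{N}'$ containing it, and symmetrically adjoining $\sigma'$ to $\mathcal{N}''$ subdivides precisely one factor of $\mathcal{N}''$. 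Hence $\Sigma\cup\{\sigma',\sigma''\}$ is again a sub-factorization of each, and I take $\mathcal{N}'''$ to be the factorization it defines. Together with finiteness of the number of prime factors — guaranteed by the invariance and finiteness of the reduced underlying graph $G^{\text{reduced}}$ of Section~\ref{SS:UnderGraph} — this local-confluence statement is exactly the input needed to run the Diamond Lemma and deduce Theorem~\ref{T:UniquePrimeFactorization}.
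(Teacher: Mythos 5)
Your case $i\neq j$ agrees with the paper, which likewise performs the two bisections simultaneously in distinct balls. The crux case $i=j$, however, contains a dimensional error at exactly the step you identify as carrying the main weight. The decomposing (Conway) spheres here are $3$--spheres in $S^4$ --- the paper's system of decomposing spheres bounds $4$--balls $B^{\mathrm{in}}_i$, $B^{\mathrm{out}}_i$ --- so two of them in general position intersect in a submanifold of dimension $3+3-4=2$: a disjoint union of closed surfaces, possibly of positive genus, not a union of circles. The innermost-circle surgery you import from Schubert's $3$--dimensional proof therefore does not apply as stated; an innermost intersection component can be a torus, and eliminating it requires compressions and genus control, i.e.\ substantially more machinery than the irreducibility input you invoke. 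In addition, even where a surgery is available, you never verify the point that Schubert's argument actually labors over: that the surgered sphere induces the \emph{same} partition of interactions (hence the same sub-factorization) as the sphere you started with. As written, the disjointification step --- which you correctly flag as the heart of the matter --- is not established.

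The paper avoids this problem by construction rather than by surgery: within its proof, ``subdivide'' means \emph{bisect a decomposing sphere along a $3$--dimensional hyperplane}, so in the same-ball case one has two hyperplanes $L^\prime$ and $L^{\prime\prime}$, puts them in general position, and cuts along both simultaneously. Each hyperplane meets the tangle in zero, one, or two points, so the outcome is a new system of decomposing spheres together with some spheres enclosing only trivial factors, which are then discarded --- legitimately, since uniqueness in Theorem~\ref{T:UniquePrimeFactorization} is only asserted up to unit factors $T_i$. To repair your route you would either have to prove the $4$--dimensional disjointification lemma for arbitrary Conway spheres (a genuinely nontrivial analogue of Schubert's lemma in this setting), or first isotope your spheres into the hyperplane-bisection normal form, at which point you are running the paper's argument. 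Your closing remarks on finiteness via $G^{\text{reduced}}$ and the Diamond Lemma do match the paper's deduction of unique factorization from this lemma.
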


\begin{proof}
We use the topology of sphere-and-interval tangles. Without the limitation of generality, Inca foams are assumed to be \emph{non-split} (there do not exist two disjoint $4$--spheres each containing a nontrivial subtangle of our tangle). The proof is analogous to the proof of unique prime decomposition for knots in $3$--space (\textit{e.g} \cite{BurdeZieschang:03}).

We first establish language.

The converse of connect sum is \emph{cancellation}. To \emph{cancel} a factor $N=(H,S_H,\phi_H)$ in $M=(G,S,\phi)$ is to replace $M$ by a Gau{ss} diagram $M-N\ass (G,S\setminus S_H,\phi|_{S\setminus S_H})$. Topologically, we cancel a factor by replacing each of its spheres in $H$ by an interval connecting its incident segments. For concreteness, parameterizing $S^2$ as the unit sphere on the $xyz$ hyperplane in $\mathds{R}^4$, we replace $S^2$ by $(\cos(t),0,\sin(t),0)$ with $t\in[0,\pi]$, smoothing corners as required. See Figure~\ref{F:Trivialization4d}.

\begin{figure}[htb]
\centering
\includegraphics[width=4.5in]{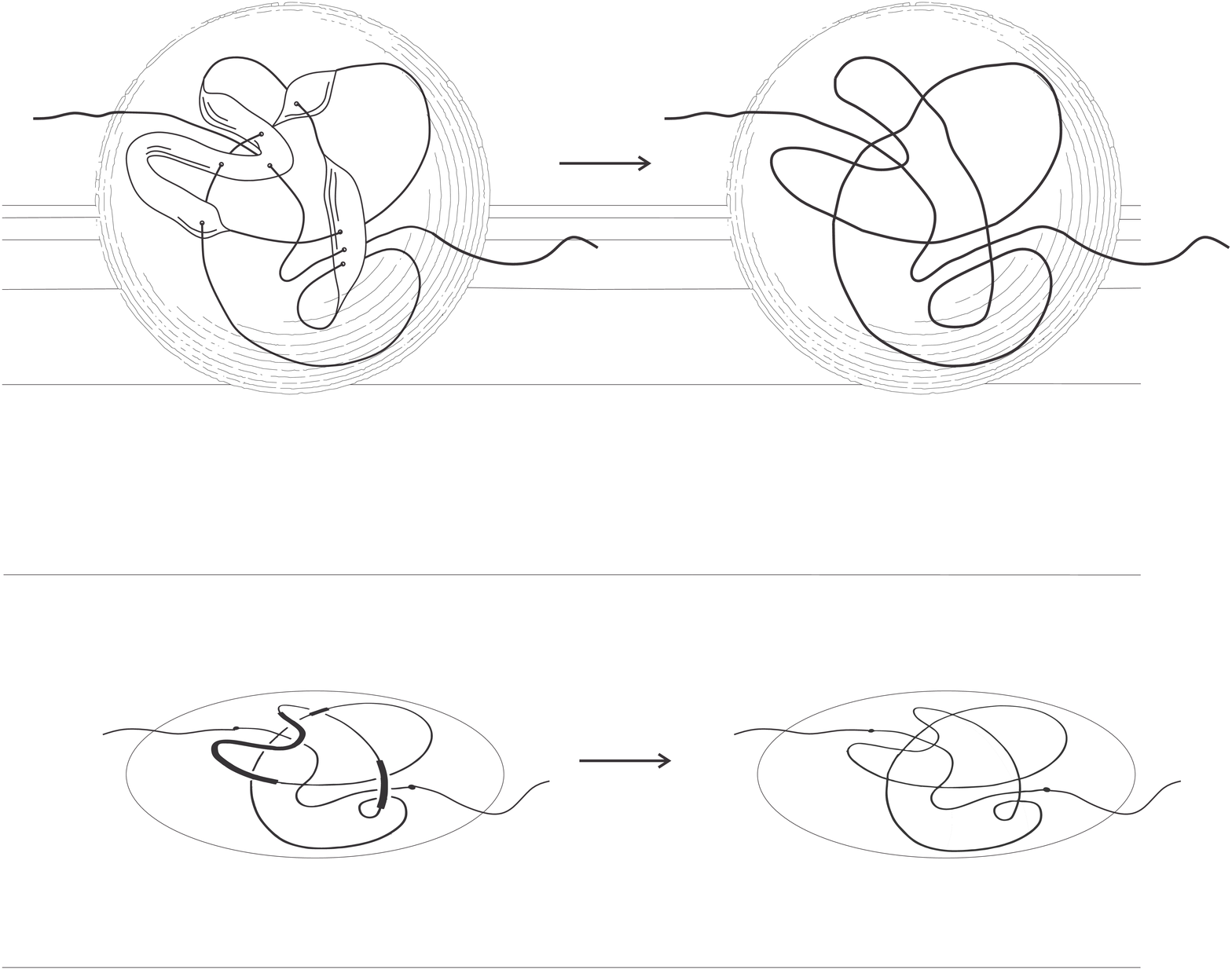}
\caption{\label{F:Trivialization4d}Cancelling a factor.}
\end{figure}

Fix a $3$--dimensional hyperplane $H$ with respect to which we take a Rosemeister diagram $D$ for $K$.

A \emph{system of decomposing spheres} for a sphere-and-interval tangle $K$ is a set of disjoint Conway spheres $S_1,S_2,\ldots,S_k$ embedded in $S^4\simeq \mathds{R}^4\cup \{\infty\}$ bounding $2k$ $4$--balls $B^\mathrm{in}_1,B^\mathrm{in}_2,\ldots,B^\mathrm{in}_k$ and $B^\mathrm{out}_1,B^\mathrm{out}_2,\ldots,B^\mathrm{out}_k$ in $S^4$ correspondingly. If $B_i$ properly contains $4$--balls $B_{\ell(1)},B_{\ell(2)},\ldots,B_{\ell(s)}$ then the \emph{domain} of $S_i$ is defined to be $B^\mathrm{in}_i$ minus the interiors of $B^\mathrm{in}_{\ell(1)},B^\mathrm{in}_{\ell(2)},\ldots,B^\mathrm{in}_{\ell(s)}$. We require that $K\cap \bigcup_{i=1}^k B^{\mathrm{out}}_i$ is trivial, so that all of the `action' takes place inside the domains of $S_1,S_2,\ldots,S_k$.

To subdivide, bisect a decomposing sphere using a $3$--dimensional hyperplane $P\simeq \mathds{R}^3$, separating it into two spheres. For simplicity, we are ignoring the technical details of how to push off the resulting spheres relative to one another, smoothing corners, general position, \textit{etc.}

Let $S_1,S_2,\ldots,S_{m-1}$ be a set of decomposing spheres inducing the factorization $\mathcal{N}$. Without the limitation of generality we may assume that $\mathcal{N}^\prime$ and $\mathcal{N}^{\prime\prime}$ both arise from $\mathcal{N}$ by a single subdivision. If the sub-factorizations $\mathcal{N}^\prime$ and $\mathcal{N}^{\prime\prime}$ arise from bisections of distinct balls $B_i^{\mathrm{in}}$ and $B_j^{\mathrm{in}}$, we can perform both bisections simultaneously to obtain a common refinement $\mathcal{N}^{\prime\prime\prime}$ for both $\mathcal{N}^\prime$ and $\mathcal{N}^{\prime\prime}$. If both sub-factorizations are bisections of the same ball $B_{m-1}^{\mathrm{in}}$, let us take $S_1,S_2,\ldots S_{m-2}, S^\prime_{m-1},S^\prime_{m}$ as the system of decomposing spheres $\mathcal{N}^\prime$, and $S_1,S_2,\ldots S_{m-2}, S^{\prime\prime}_{m-1},S^{\prime\prime}_{m}$ as the system of decomposing spheres $\mathcal{N}^{\prime\prime}$, where $(S_{m-1}^\prime,S_m^\prime)$ is induced by bisecting $S_{m-1}$ along a $3$--dimensional hyperplane $L^\prime$, and $(S_{m-1}^{\prime\prime},S_m^{\prime\prime})$ is induced by bisecting $S_{m-1}$ along a $3$--dimensional hyperplane $L^{\prime\prime}$. Assume general position, and cut along both $L^\prime$ and $L^{\prime\prime}$, pushing off and smoothing as required. Each of $L^\prime$ and $L^{\prime\prime}$ meet $K$ at zero, one, or two points, and in all cases we obtain a new set of decomposing spheres plus some spheres containing trivial factors.  We are working modulo trivial factors, so these trivial factors created along the way may be ignored. We have thus constructed the requisite common refinement.
\end{proof}

\begin{remark}
Neither w-knots nor virtual knots (w-knots without the UC move) have a good notion of prime decomposition \cite{Kim:00,KishinoSatoh:04}. The classical counterexample for virtual knots is Kishino's knot, which is a nontrivial virtual knot both of whose components are trivial (Figure~\ref{F:Kishino}). But, as we have shown, Inca foams suffer no such deficiency. We illustrate with an analogue of Kishino's knot in Figure~\ref{F:Kishino_m1}.
\end{remark}

\begin{figure}
\centering
\includegraphics[width=2in]{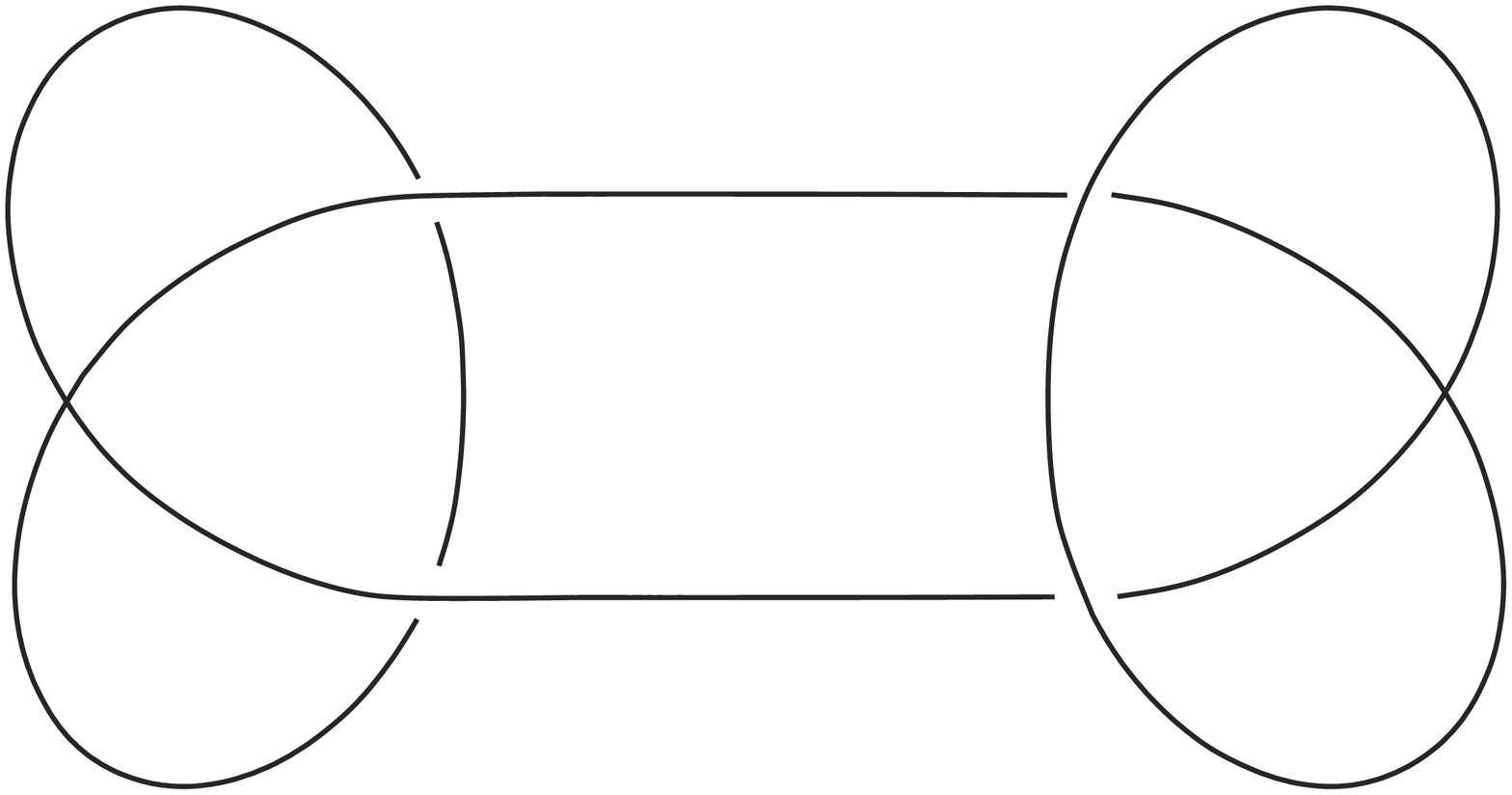}
\caption{\label{F:Kishino}Kishino's knot. A nontrivial connect sum of two trivial virtual knots.}
\end{figure}

\begin{figure}
\centering
\includegraphics[width=5in]{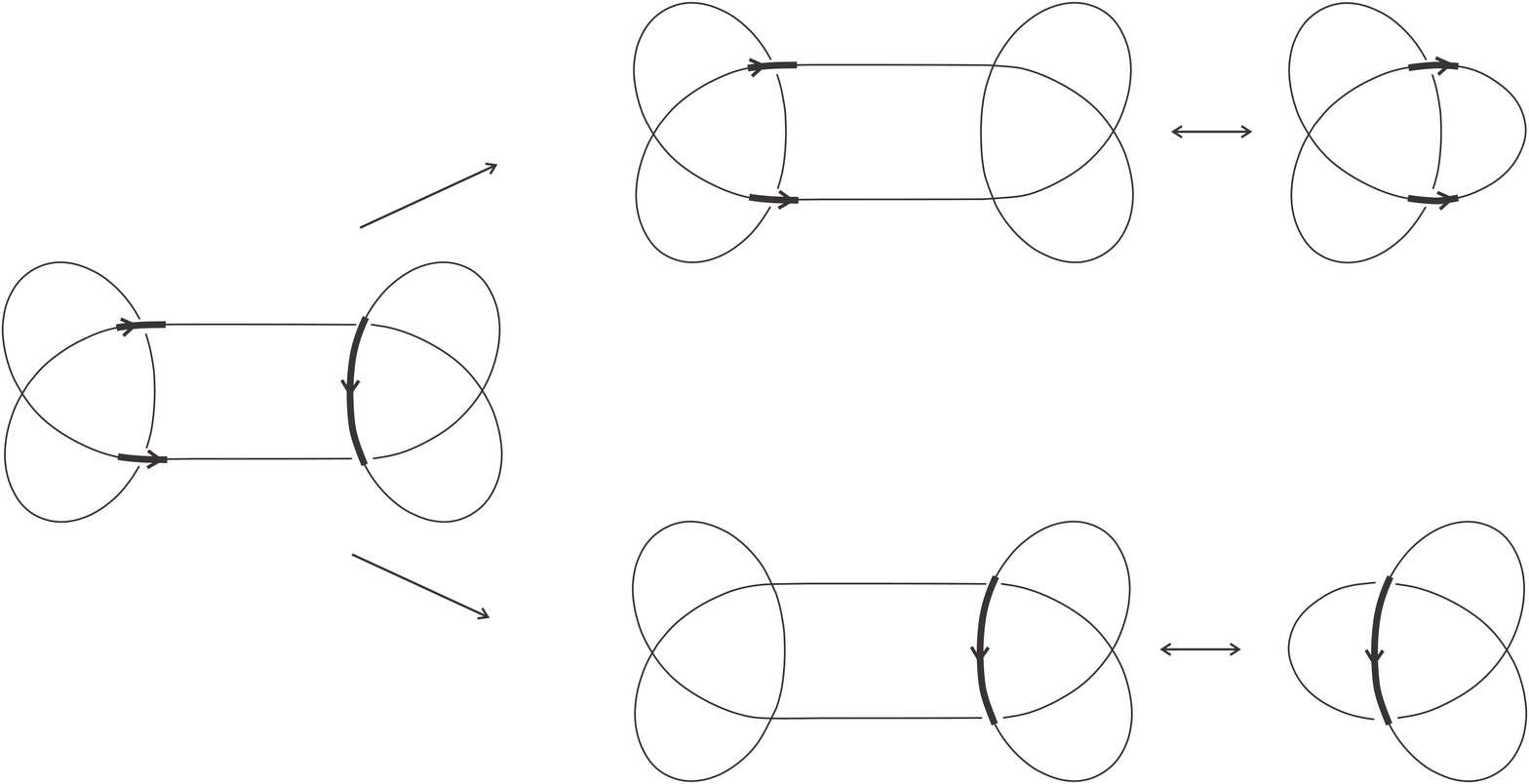}
\caption{\label{F:Kishino_m1}An Inca foam analogue of Kishino's knot is perfectly well-behaved with respect to connect sum.}
\end{figure}

\section*{Acknowledgments}
DM wishes to thank Dror Bar-Natan for useful discussions and for pointing out the topological picture presented in this paper.

\end{document}